\newcommand{\ignore}[1]{}
\newcommand{\oR}{{\mathbb R}}
\newcommand{\oN}{{\mathbb N}}
\journalname{}
\begin{document}

\title{An alternative proof of a PTAS for fixed-degree polynomial optimization over the simplex}


\author{Etienne de Klerk         \and
        Monique Laurent          \and
        Zhao Sun}

\institute{Etienne de Klerk \at
              Tilburg University \at
              PO Box 90153, 5000 LE Tilburg, The Netherlands \\
              \email{E.deKlerk@uvt.nl}
           \and
           Monique Laurent \at
              Centrum Wiskunde \& Informatica (CWI), Amsterdam and Tilburg University \at
              CWI, Postbus 94079, 1090 GB Amsterdam, The Netherlands\\
              \email{monique@cwi.nl}
           \and
           Zhao Sun \at
              Tilburg University \at
              PO Box 90153, 5000 LE Tilburg, The Netherlands \\
              \email{z.sun@uvt.nl}
}

\date{Received: date / Accepted: date}

\maketitle

\begin{abstract}
{
The problem of minimizing a polynomial over the standard simplex is one of the basic
NP-hard nonlinear optimization problems --- it contains the maximum clique problem in graphs
as a special case.
It is known that the problem allows a polynomial-time approximation scheme (PTAS)
for polynomials of fixed degree, which is based on polynomial evaluations at the points of a sequence of regular grids.
In this paper, we provide an alternative proof of the PTAS property.
The proof relies on the properties of Bernstein approximation on the simplex.
We also refine a known error bound for the scheme for polynomials of degree three.
The main contribution of the paper is to provide new insight into the PTAS by establishing precise links with Bernstein approximation and the multinomial distribution.
}

{
\keywords{Polynomial optimization over a simplex \and PTAS \and Bernstein approximation}
\subclass{90C30 \and	90C60 }
}
\end{abstract}

\section{Introduction and preliminaries}
{\subsection{Polynomial optimization over the simplex}
Let $\mathcal{H}_{n,d}$ denote the set of all homogeneous polynomials of degree $d$ in $n$ variables.
 We consider the problem of minimizing a polynomial $f\in\mathcal{H}_{n,d}$ on the standard simplex
\[
\Delta_n=\left\{x\in\oR_+^n:\sum_{i=1}^nx_i=1\right\}.
\]
That is, the problem of computing
$$\underline{f}=\min_{x\in\Delta_n}f(x), \ \text{ or } \ \overline{f}=\max_{x\in\Delta_n}f(x).$$
}

{
This problem is known to be NP-hard, even if $f$ is a quadratic function. Indeed, if $G$ denotes a graph
with vertex set $V$ and adjacency matrix $A$, and $I$ denotes the identity matrix, then
the maximum cardinality $\alpha(G)$ of a stable set in $G$ can be obtained via
$${1\over \alpha(G)}=\min_{x\in \Delta_{|V|}}x^T(I+A)x,$$
by a theorem of Motzkin and Straus \cite{MS}.
}

{
On the other hand, the problem does allow a polynomial-time approximation scheme (PTAS), as was shown by Bomze and De Klerk (for quadratic $f$)
\cite{BK02}, and by De Klerk, Laurent and Parrilo (for more general, fixed-degree $f$) \cite{KLP06}.
The PTAS is particularly simple, and takes the minimum of $f$ on the regular grid:
\[
\Delta(n,r)=\{x\in\Delta_n:rx\in\oN^n\}
\]
for increasing values of $r$.
We denote the minimum over the grid by
\begin{equation*}
f_{\Delta(n,r)}=\min_{x\in\Delta(n,r)}f(x),
\end{equation*}
and observe that the computation of $f_{\Delta(n,r)}$ requires $|\Delta(n,r)| = {n+r-1 \choose r}$
evaluations of $f$. 

Several properties of the regular grid $\Delta(n,r)$ have been studied in the literature.
In Bos \cite{Bos}, the Lebesgue constant of $\Delta(n,r)$ is studied in the context of Lagrange interpolation and finite element methods.
Given a point $x \in \Delta_n$, Bomze, Gollowitzer and Yildirim \cite{BGY} study a scheme to find the closest point to $x$ on $\Delta(n,r)$
 with respect to certain norms (including $\ell_p$-norms for finite $p$).
 Furthermore, for any quadratic polynomial $f \in \mathcal{H}_{n,2}$ and $r\ge 2$,
 Sagol and Yildirim \cite{SY13} and Yildirim \cite{YEA12} consider the upper bound on $\underline{f}$ defined
by $\min_{x\in\cup_{k=2}^{r}\Delta(n,k)} f(x)$ $(r = 2,3,\ldots)$, and analyze the error bound.}
The following error bounds are known for the approximation $f_{\Delta(n,r)}$ of $\underline{f}$.

\begin{theorem}[(i) Bomze-De Klerk \cite{BK02} and (ii) De Klerk-Laurent-Parrilo \cite{KLP06}]
\label{thm:PTAS}
\begin{itemize}
\item[(i)]
For any quadratic polynomial $f \in \mathcal{H}_{n,2}$ and $r\ge 2$,
one has $$f_{\Delta(n,r)}-\underline{f} \le {\overline{f}-\underline{f}\over r}.$$
\item[(ii)]
For any  polynomial $f \in \mathcal{H}_{n,d}$ and $r\ge d$, one has
\[
f_{\Delta(n,r)}-\underline{f} \le  \left(1-{r^{\underline{d}}\over r^d}\right){2d-1\choose d}d^d(\overline{f}-\underline{f}),
\]
where $r^{\underline{d}} = r(r-1)\cdots(r-d+1)$.
\end{itemize}
\end{theorem}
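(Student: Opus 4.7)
The plan is to unify both bounds through the probabilistic interpretation of the Bernstein operator. For $x\in\Delta_n$ let $X\sim\mathrm{Multinomial}(r,x)$, so that $X/r\in\Delta(n,r)$ almost surely and the Bernstein polynomial $B_rf(x):=\mathbb{E}[f(X/r)]$ is a convex combination of values of $f$ on the grid; in particular $B_rf(x)\ge f_{\Delta(n,r)}$ for every $x\in\Delta_n$. Specializing at a global minimizer $x^*$ of $f$ on $\Delta_n$ gives the core reduction
$$f_{\Delta(n,r)}-\underline f \;\le\; B_rf(x^*)-f(x^*),$$
so both parts of the theorem reduce to bounding the Bernstein approximation error at $x^*$.

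To make the error explicit for $f\in\mathcal{H}_{n,d}$, I will write $X=Y_1+\cdots+Y_r$ with the $Y_i$ i.i.d.\ and $P(Y_i=e_j)=x_j$, and use the symmetric multilinear polar form $F$ of $f$. Homogeneity and multilinearity yield $r^d f(X/r)=\sum_{(i_1,\ldots,i_d)\in[r]^d}F(Y_{i_1},\ldots,Y_{i_d})$. Grouping the $d$-tuples by the set partition $\pi$ of $[d]$ that records index coincidences, and using that there are $r^{\underline{|\pi|}}$ tuples per $\pi$ while the involved $Y$'s are i.i.d.\ across the blocks of $\pi$, the expectation becomes
$$B_rf(x)=\frac{r^{\underline d}}{r^d}\,f(x)+\sum_{k=1}^{d-1}\frac{r^{\underline k}}{r^d}\,P_k(x),$$
where each $P_k$ collects the contributions of partitions with exactly $k$ blocks and is a homogeneous polynomial of degree $k$ in $x$.

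For part (i), $d=2$, the only partition with $k<d$ is the coarsest one, and the decomposition reduces to $B_rf(x)=\tfrac{r-1}{r}f(x)+\tfrac{1}{r}\sum_i f(e_i)x_i$. Since $L(x):=\sum_if(e_i)x_i$ is a convex combination of values of $f$ on $\Delta_n$ and hence lies in $[\underline f,\overline f]$, evaluating at $x^*$ gives
$$B_rf(x^*)-f(x^*)=\tfrac1r(L(x^*)-\underline f)\le\tfrac1r(\overline f-\underline f),$$
which is the bound of~(i).

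For part (ii) I first normalize by replacing $f$ with $f-\underline f\,(\sum_i x_i)^d$, which preserves both $\mathcal{H}_{n,d}$-membership and the approximation gap, so WLOG $\underline f=0$, $f\ge 0$ on $\Delta_n$, and $f(x^*)=0$. The reduction then becomes $B_rf(x^*)=\sum_{k=1}^{d-1}(r^{\underline k}/r^d)P_k(x^*)$, and the task is to bound this by $(1-r^{\underline d}/r^d)\binom{2d-1}{d}d^d\,\overline f$. The strategy is to rewrite each $P_k(x^*)$ as a weighted combination of values $f(y)$ for $y\in\Delta_n$, bound the total weight in a dimension-free manner by $\binom{2d-1}{d}d^d$ (with $\binom{2d-1}{d}$ counting the multi-indices $\gamma$ of weight $d$ on at most $d$ active coordinates that arise from the $Y$-evaluations, and $d^d$ bounding the number of collision patterns on $[d]$), and then conclude via $|f|\le\overline f$ on $\Delta_n$ together with the identity $\sum_{k=0}^{d-1}r^{\underline k}/r^d=1-r^{\underline d}/r^d$ valid through the Stirling expansion $r^d=\sum_k S(d,k)r^{\underline k}$. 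The main obstacle I anticipate is precisely this combinatorial rewriting: a naive monomial expansion of $P_k$ produces coefficients of $f$ rather than values of $f$ on $\Delta_n$, which would introduce dimension-dependent or coefficient-norm constants instead of the claimed dimension-free bound, so recovering the stated constant requires a careful reinterpretation of the $P_k$'s as averages of $f$ on scaled standard-simplex points.
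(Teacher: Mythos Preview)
Your proof of part~(i) is correct and is essentially the argument the paper gives in Section~\ref{sec2} (and which it traces back to Nesterov): the quadratic Bernstein identity $B_rf(x)=\tfrac{r-1}{r}f(x)+\tfrac{1}{r}\sum_i f(e_i)x_i$ together with $\sum_i f(e_i)x_i\le\overline f$.

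For part~(ii) your framework is also the paper's---the multinomial/Bernstein reduction $f_{\Delta(n,r)}-\underline f\le B_rf(x^*)-f(x^*)$ and the block-partition expansion of $B_rf$---but there is a genuine gap at exactly the step you flag as the obstacle. The ``coefficients of $f$'' that the naive expansion of $P_k$ produces are precisely the Bernstein coefficients $f_\beta\beta!/d!$ (equivalently the polar-form values $F(e_{j_1},\ldots,e_{j_d})$), and the paper does \emph{not} avoid them: it bounds $B_rf(x)-f(x)$ by the Bernstein-coefficient range $\max_\beta f_\beta\beta!/d!-\min_\beta f_\beta\beta!/d!$ times a total weight, shows via two Stirling-number lemmas that this weight equals exactly $r^d-r^{\underline d}$, and then invokes Theorem~\ref{thmptas1} (quoted from \cite{KLP06}) to bound the coefficient range by $\binom{2d-1}{d}d^d(\overline f-\underline f)$. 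Your alternative plan---rewriting each $P_k(x^*)$ directly as a combination of values $f(y)$ for $y\in\Delta_n$ with total weight at most $\binom{2d-1}{d}d^d$---would amount to reproving Theorem~\ref{thmptas1} from scratch, and the heuristic you offer for the constant (multi-indices times collision patterns) does not actually yield it. Finally, the identity you write is false as stated: the Stirling expansion gives $\sum_{k=0}^{d-1}S(d,k)\,r^{\underline k}/r^d=1-r^{\underline d}/r^d$, not $\sum_{k=0}^{d-1}r^{\underline k}/r^d$ (check $d=2$). The weights $S(d,k)$ count the partitions with $k$ blocks and are already absorbed into your $P_k$'s, so the bookkeeping can be repaired, but the missing ingredient remains the bound on the polar-form values, which the paper imports rather than reproves.
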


\noindent Note that these results indeed imply the existence of a PTAS in the sense of the following definition,
that has been used by several authors (see e.g. \cite{BR,KHE08,KLP06,NWY,Va}).
\begin{definition}[PTAS]
A value $\psi_{\epsilon}$ approximates $\underline{f}$ with relative accuracy $\epsilon\in[0,1]$ if
$$|\psi_{\epsilon}-\underline{f}|\le\epsilon(\overline{f}-\underline{f}).$$
The approximation is called implementable if $\psi_{\epsilon}=f(x_{\epsilon})$ for some feasible $x_{\epsilon}$.
If a problem allows an implementable approximation $\psi_{\epsilon}=f(x_{\epsilon})$ for each $\epsilon\in(0,1]$,
such that the feasible $x_{\epsilon}$ can be computed in time polynomial in $n$ and the bit size required to represent $f$, we say that the problem allows a polynomial time approximation scheme (PTAS).
\end{definition}

Indeed, Theorem \ref{thm:PTAS} clearly implies that $f_{\Delta(n,r)}$ yields a PTAS for polynomials of fixed degree $d$.
In this paper we give alternative proofs of this result, and also refine the relevant error bound in the special case of degree three polynomials.
The proof of the PTAS in the quadratic case is completely elementary, and much simpler than the proof given in \cite{BK02}.
It is in fact closely related to a proof given by Nesterov \cite{Nes2003}; see Section \ref{sec:conclusion}.
In fact, the main contribution of our paper is to provide new insight into the PTAS by establishing precise connections with
Bernstein approximation, and the approach of Nesterov \cite{Nes2003}, which in turn requires an understanding of the
 precise connection to the multinomial distribution.
We also prove, by giving an example, that the error bound in Theorem \ref{thm:PTAS}(i) is tight in terms of its dependence on $r$.

Our main tool will be Bernstein approximation on the simplex {(which is similar to the approach used by De Klerk and Laurent \cite{KL} for polynomial optimization over the hypercube)}.

We start by reviewing the necessary background material on Bernstein approximation.

\subsection{Bernstein approximation on the simplex}\label{sec1}
Given an integer $r\ge0$, we define
$$I(n,r):=\left\{\alpha\in\oN^n: \sum_{i=1}^n \alpha_i =r\right\} = r\Delta(n,r).$$
The Bernstein approximation of order $r\ge 1$ on the simplex of a polynomial $f\in \mathcal{H}_{n,d}$  is the polynomial  $B_r(f)\in \mathcal{H}_{n,r}$ defined by

\begin{equation}\label{bernstein-approximation}
B_r(f)(x)=\sum_{\alpha\in I(n,r)}f\left({\alpha\over r}\right){r!\over \alpha!}x^{\alpha}, 
\end{equation}
where  $\alpha!:=\prod_{i=1}^n \alpha_i!$ and $x^{\alpha}:=\prod_{i=1}^nx_i^{\alpha_i}$.
For instance, for the constant polynomial $f\equiv 1$, its Bernstein approximation of any order $r$ is
$\sum_{\alpha\in I(n,r)} {r!\over \alpha!} x^\alpha$,  which is equal to $ (\sum_{i=1}^n x_i)^r$ by the multinomial theorem, and thus to 1 for any  $x\in \Delta_n$.

There is a vast literature on Bernstein approximation, and the interested reader may consult e.g.\ the papers by Ditzian \cite{Ditzian1,Ditzian2},
Ditzian and Zhou \cite{Ditzian3}, the book by Altomare and Campiti \cite{Altomare}, and the references therein for more details than given here.

{To motivate our use of Bernstein approximation, we state one well-known result that shows uniform convergence.}
\begin{theorem}[See e.g.\ \cite{Altomare}, \S 5.2.11]
\label{th:Altomare}
Let $f:\mathbb{R}^n \rightarrow \mathbb{R}$ be any continuous function defined on $\Delta_n$, and $B_r(f)$ as defined in (\ref{bernstein-approximation}).
One has
\[
\left|B_r(f)(x) - f(x)\right| \le  2\omega\left(f,\frac{1}{\sqrt r}\right) \quad \quad \forall x \in \Delta_n,
\]
where $\omega$ denotes the modulus of continuity:
\[
\omega(f,\delta) := \max_{\stackrel{x,y \in \Delta_n}{\|x-y\| \le \delta}} |f(x) - f(y)| \quad \quad (\delta \ge 0).
\]
\end{theorem}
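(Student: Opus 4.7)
The plan is to exploit the probabilistic interpretation of the Bernstein operator that the paper emphasizes: the weights $\frac{r!}{\alpha!}x^\alpha$ for $\alpha \in I(n,r)$ are precisely the point probabilities of the multinomial distribution $\text{Mult}(r,x)$. In particular, by the multinomial theorem these weights sum to $(\sum_i x_i)^r = 1$ for $x \in \Delta_n$, so we may rewrite
\[
B_r(f)(x) - f(x) = \sum_{\alpha \in I(n,r)} \bigl(f(\alpha/r) - f(x)\bigr)\,\frac{r!}{\alpha!}\, x^\alpha,
\]
and it suffices to bound the absolute values in the sum on average.

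Next I would use two classical facts. First, $|f(\alpha/r) - f(x)| \le \omega(f, \|\alpha/r - x\|)$ by the definition of the modulus of continuity. Second, $\omega$ has the standard subadditivity property $\omega(f, \lambda \delta) \le (1+\lambda)\,\omega(f,\delta)$ for any $\lambda \ge 0$; applying this with $\delta = 1/\sqrt{r}$ and $\lambda = \sqrt{r}\,\|\alpha/r - x\|$ gives
\[
|f(\alpha/r) - f(x)| \le \bigl(1 + \sqrt{r}\,\|\alpha/r - x\|\bigr)\,\omega\bigl(f, 1/\sqrt{r}\bigr).
\]
Plugging this into the previous display and using that the weights sum to $1$, the task reduces to bounding $\sum_\alpha \|\alpha/r - x\|\,\frac{r!}{\alpha!}x^\alpha$ by $1/\sqrt{r}$.

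Here is where the multinomial interpretation pays off. By Cauchy--Schwarz (equivalently, Jensen's inequality for the concave square root),
\[
\sum_\alpha \|\alpha/r - x\|\,\frac{r!}{\alpha!}x^\alpha \le \Bigl(\sum_\alpha \|\alpha/r - x\|^2 \,\frac{r!}{\alpha!}x^\alpha\Bigr)^{1/2} = \bigl(\mathbb{E}\|Y/r - x\|^2\bigr)^{1/2},
\]
where $Y \sim \text{Mult}(r,x)$. The coordinatewise variance formula $\text{Var}(Y_i) = r x_i(1-x_i)$ gives
\[
\mathbb{E}\|Y/r - x\|^2 = \frac{1}{r}\sum_{i=1}^n x_i(1-x_i) = \frac{1 - \sum_i x_i^2}{r} \le \frac{1}{r},
\]
the last inequality holding since $\sum_i x_i^2 \ge 0$ (in fact $\ge 1/n$) on $\Delta_n$. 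Combining everything yields $|B_r(f)(x) - f(x)| \le (1 + \sqrt{r}\cdot 1/\sqrt{r})\,\omega(f,1/\sqrt{r}) = 2\,\omega(f,1/\sqrt{r})$, as required.

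There is no genuine obstacle here; the only ingredients that must be granted are the two elementary properties of $\omega$ (monotonicity giving $|f(y)-f(x)|\le \omega(f,\|x-y\|)$, and subadditivity $\omega(f,\lambda\delta)\le(1+\lambda)\omega(f,\delta)$), which are standard for any continuous $f$ on a compact set. The conceptually important step, and the one worth highlighting in the paper, is recognizing $B_r(f)(x)$ as the expectation of $f(Y/r)$ for a multinomial random variable $Y$, since this is exactly the viewpoint that will later connect the PTAS to Nesterov's approach.
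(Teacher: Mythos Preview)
Your argument is correct; it is in fact the classical Popoviciu-type proof of this estimate for multivariate Bernstein operators. Note, however, that the paper does not supply its own proof of this theorem: it is quoted as a known result with a reference to \cite{Altomare}, \S 5.2.11, and is used only to motivate why Bernstein approximation is relevant. So there is no proof in the paper to compare against. That said, your write-up is exactly what one finds in the standard references, and your emphasis on the multinomial interpretation of $B_r(f)(x)$ as $\mathbb{E}[f(Y/r)]$ with $Y\sim\mathrm{Mult}(r,x)$ dovetails nicely with the paper's later discussion in Section~\ref{sec:conclusion}, where this identity is made explicit when relating the approach to Nesterov's.
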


Next we state some simple inequalities relating a polynomial, its Bernstein approximation {and their minimum over the set $\Delta(n,r)$ of grid points}.

\begin{lemma}\label{claim1}
Given a polynomial $f\in \mathcal{H}_{n,d}$ and $r\ge 1$, one has
\begin{equation}\label{eqA}
\min_{x\in\Delta_n}B_r(f)(x)\ge f_{\Delta(n,r)},
\end{equation}
\begin{equation}\label{eqB}
f_{\Delta(n,r)}-\underline{f} \le \min_{x\in \Delta(n,r)} B_r(f)(x)-\underline{f} \le
\max_{x\in \Delta_n} \{B_r(f)(x)-f(x)\}.\end{equation}
\end{lemma}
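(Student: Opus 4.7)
The plan is to exploit the probabilistic / convex-combination structure of the Bernstein operator. For every $x \in \Delta_n$, the multinomial theorem (already invoked in the paper for $f \equiv 1$) gives $\sum_{\alpha \in I(n,r)} \frac{r!}{\alpha!} x^\alpha = 1$ with nonnegative summands, so $B_r(f)(x) = \sum_{\alpha \in I(n,r)} f(\alpha/r)\,\frac{r!}{\alpha!}\,x^\alpha$ is a convex combination of the values $\{f(\alpha/r) : \alpha \in I(n,r)\}$ taken on the grid $\Delta(n,r)$. Both inequalities will follow by combining this representation with the right test point.

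For (\ref{eqA}): since $\alpha/r \in \Delta(n,r)$ for every $\alpha \in I(n,r)$, we have $f(\alpha/r) \geq f_{\Delta(n,r)}$, and bounding the convex combination termwise gives $B_r(f)(x) \geq f_{\Delta(n,r)}$ for every $x \in \Delta_n$; minimizing over $x$ yields (\ref{eqA}). The left inequality of (\ref{eqB}) is then immediate: it is simply (\ref{eqA}) restricted to points of $\Delta(n,r) \subseteq \Delta_n$, after subtracting $\underline{f}$ from both sides.

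For the right inequality of (\ref{eqB}), I choose a minimizer $x^{*} \in \Delta_n$ of $f$, so that $f(x^{*}) = \underline{f}$. By the definition of the maximum, $B_r(f)(x^{*}) - f(x^{*}) \leq \max_{x \in \Delta_n}\{B_r(f)(x) - f(x)\}$, which rearranges to $B_r(f)(x^{*}) \leq \underline{f} + \max_{x \in \Delta_n}\{B_r(f)(x) - f(x)\}$. Since $x^{*} \in \Delta_n$, the value $B_r(f)(x^{*})$ upper-bounds the minimum of $B_r(f)$ over any set containing $x^{*}$, which completes the chain.

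None of these steps is substantial in isolation --- the whole lemma is a few lines of elementary manipulation once the convex-combination form of $B_r(f)$ is recognized. The \emph{real} obstacle is what the lemma is designed to enable: the net inequality $f_{\Delta(n,r)} - \underline{f} \leq \max_{x \in \Delta_n}\{B_r(f)(x) - f(x)\}$ reduces the PTAS error to a Bernstein approximation error, and it is sharply bounding this latter quantity for polynomials of fixed degree (refining the qualitative estimate of Theorem \ref{th:Altomare}) where the real technical work of the paper will lie.
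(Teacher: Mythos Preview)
Your argument is correct and essentially identical to the paper's own proof: both read $B_r(f)(x)$ as a convex combination of the grid values $f(\alpha/r)$ via the multinomial theorem to get (\ref{eqA}), deduce the left inequality of (\ref{eqB}) immediately from it, and plug in a global minimizer $x^*\in\Delta_n$ of $f$ for the right inequality.

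One small remark: your careful phrase ``any set containing $x^*$'' is exactly right, and worth making explicit. Since $x^*\in\Delta_n$ need not lie in $\Delta(n,r)$, what you (and the paper) actually establish is the right inequality of (\ref{eqB}) with $\min_{x\in\Delta_n}$ in the middle term; the printed $\Delta(n,r)$ there is a typo, and indeed the numbers in the paper's own Example~\ref{ex0} violate that version (one computes $\min_{x\in\Delta(2,2)}B_2(f)(x)-\underline{f}=33/32>1=\max_{x\in\Delta_2}\{B_2(f)(x)-f(x)\}$). This does not affect anything downstream, since only the outer inequality $f_{\Delta(n,r)}-\underline{f}\le\max_{x\in\Delta_n}\{B_r(f)(x)-f(x)\}$ is used later.
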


\begin{proof}
We first show (\ref{eqA}).
Fix  $x\in \Delta_n$. By the multinomial theorem,  $1=(\sum_{i=1}^nx_i)^r=\sum_{\alpha\in I(n,r)}{r!\over\alpha!}x^{\alpha}$.
Hence,  $B_r(f)(x)$ is a convex combination of the values
$f({\alpha\over r})$  ($\alpha\in I(n,r)$), which implies that
$B_r(f)(x) \ge \min_{\alpha\in I(n,r)}f({\alpha\over r}) = f_{\Delta(n,r)}.$

 The left most inequality in (\ref{eqB}) follows directly from (\ref{eqA}).
 To show the right most inequality, let $x^*$  be a global minimizer of $f$ over $\Delta_n$, so that
$f(x^*)=\underline{f}$.  Then,
$\min_{x\in \Delta_n}B_r(f)(x) -\underline{f}$ is at most $B_r(f)(x^*)-\underline{f}=B_r(f)(x^*)-f(x^*),$ which concludes the proof.
\qed
\end{proof}
{The motivation for using Bernstein approximation} to study the quantity $f_{\Delta(n,r)}$ is {now} clear from Theorem \ref{th:Altomare} and relation \eqref{eqA}. Indeed, the Bernstein approximation $B_r(f)$ converges uniformly to $f$ as $r \rightarrow \infty$, and the minimum of $B_r(f)$ on $\Delta_n$ is lower bounded
by $f_{\Delta(n,r)}$.

{Our strategy for upper bounding the range $f_{\Delta(n,r)}-\underline{f}$ will be to upper bound the (possibly larger) range
$\max_{x\in \Delta_n} \{B_r(f)(x)-f(x)\}$ -- see Theorems \ref{berineq}, \ref{thmcubic}, \ref{thmsf} and \ref{thmptas2}.
Hence our results can be seen as refinements of the previously known results quoted in Theorem \ref{thm:PTAS} above.}

\medskip
\noindent The following example shows  that all inequalities   can be strict in  relation (\ref{eqB}).


\begin{example}\label{ex0}
Consider the quadratic polynomial $f=2x_1^2+x_2^2-5x_1x_2\in \mathcal{H}_{2,2}$. Then,
 $B_2(f)(x)=x_1^2+{1\over 2}x_2^2-{5\over 2}x_1x_2+x_1+{1\over 2}x_2$.
 One can easily check that $\underline{f}= -{17\over 32}$ (attained at the unique minimizer $({7\over 16}, {9\over 16})$),
 $\min_{x\in \Delta_2}B_2(f)(x)= {7\over 16}$ (attained at the unique minimizer $x=({3\over 8},{5\over 8})$), and $f_{\Delta(2,2)}= -{1\over 2}$ (attained at the unique minimizer $({1\over 2}, {1\over 2})$). In this example,  the polynomial $f$ and its Bernstein approximation $B_2(f)(x)$ do not have a common minimizer over the simplex. 

Moreover, we note  that $\overline{f}=2$  and $\max_{x\in \Delta_2}\{B_2(f)(x) -f(x)\} = 1$, so that we have the following chain of strict inequalities:
\begin{equation*}\label{eqstrict}
f_{\Delta(2,2)}-\underline{f} \ (={1\over 32})  < \min_{x\in \Delta_2} B_2(f)(x) - \underline{f} \ (={31\over 32}) < \max_{x\in \Delta_2}\{B_2(f)(x) -f(x)\} \ (= 1)  < {1\over 2}(\overline{f}-\underline{f})
\ (= {81\over 64}),
\end{equation*}
which shows that all the inequalities can be strict in (\ref{eqB}).

\end{example}

\subsection{Bernstein coefficients}
For any  polynomial $f=\sum_{\beta\in I(n,d)}f_{\beta}x^{\beta}\in\mathcal{H}_{n,d}$, one can  write
$$f=\sum_{\beta\in I(n,d)}f_{\beta}x^{\beta}=\sum_{\beta\in I(n,d)}\left(f_{\beta}{\beta!\over d!}\right){d!\over \beta!}x^{\beta}.$$
We call $f_{\beta}{\beta!\over d!}$ ($\beta\in I(n,d)$)  the {\em Bernstein coefficients} of $f$, since they are the coefficients of the polynomial $f$ when it is expressed in the Bernstein basis $\{{d!\over \beta!}x^\beta: \beta\in I(n,d)\}$ of $\mathcal{H}_{n,d}$.
 Using the multinomial theorem (as in the proof of Lemma \ref{claim1}), one can see  that, for $x\in \Delta_n$, $f(x)$ is a convex combination of
its Bernstein coefficients $f_{\beta}{\beta!\over d!}$ ($\beta\in I(n,d)$). Therefore, one has
\begin{equation}\label{reprop1}
\min_{\beta\in I(n,d)}f_{\beta}{\beta!\over d!}\le\underline{f}\le\overline{f}\le\max_{\beta\in I(n,d)}f_{\beta}{\beta!\over d!}.
\end{equation}
We will use the following result of \cite{KLP06}, which bounds the range of the Bernstein coefficients in terms of the range of function values.

\begin{theorem}\label{thmptas1}\cite[Theorem 2.2]{KLP06}
For any polynomial $f=\sum_{\beta\in I(n,d)}f_{\beta}x^{\beta}\in\mathcal{H}_{n,d}$ and $x\in\Delta_n$, one has
\begin{equation*}
\overline{f}-\underline{f}\le \max_{\beta\in I(n,d)}f_{\beta}{\beta!\over d!}-\min_{\beta\in I(n,d)}f_{\beta}{\beta!\over d!}\le {2d-1\choose d}d^d (\overline{f}-\underline{f}).
\end{equation*}
\end{theorem}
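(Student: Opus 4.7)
The left inequality is immediate from (\ref{reprop1}): the multinomial identity $\sum_{\beta\in I(n,d)}\tfrac{d!}{\beta!}x^\beta=(\sum_i x_i)^d=1$ on $\Delta_n$ exhibits, for each $x\in\Delta_n$, the value $f(x)$ as a convex combination of the Bernstein coefficients $c_\beta:=f_\beta\,\beta!/d!$, which yields $\min_\beta c_\beta\le\underline{f}$ and $\overline{f}\le\max_\beta c_\beta$.

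For the right inequality, my plan is to realize each $c_\beta$ as an affine combination of values of $f$ at grid points and then control the range of the $c_\beta$ by the $\ell^1$-norm of the combination. First note that both sides of the inequality are invariant under the substitution $f\mapsto f+\lambda(\sum_i x_i)^d$: this adds $\lambda$ to every $c_\beta$ (since $(\sum_i x_i)^d$ has all Bernstein coefficients equal to $1$) and adds $\lambda$ to $f$ on $\Delta_n$. Up to an additional rescaling we may thus assume $\overline{f}-\underline{f}=1$ and $0\le f\le1$ on $\Delta_n$. Since $|\Delta(n,d)|=|I(n,d)|=\dim\mathcal{H}_{n,d}$ and this grid is unisolvent for $\mathcal{H}_{n,d}$, there exist unique scalars $\lambda_{\alpha,\beta}$ (the entries of the inverse of the matrix $M_{\alpha\beta}=\binom{d}{\beta}\alpha^\beta/d^d$ representing the Bernstein operator $B_d$ in the Bernstein basis) such that
\[
c_\beta=\sum_{\alpha\in I(n,d)}\lambda_{\alpha,\beta}\,f(\alpha/d)\quad\text{for every }f\in\mathcal{H}_{n,d}.
\]
Applying this identity to $f\equiv 1$ gives $\sum_\alpha\lambda_{\alpha,\beta}=1$; splitting $\lambda_{\alpha,\beta}=\lambda^+_{\alpha,\beta}-\lambda^-_{\alpha,\beta}$ and using $0\le f(\alpha/d)\le1$ then yields $-\tfrac12(L_\beta-1)\le c_\beta\le\tfrac12(L_\beta+1)$ with $L_\beta:=\sum_\alpha|\lambda_{\alpha,\beta}|$, and hence $\max_\beta c_\beta-\min_\beta c_\beta\le\max_\beta L_\beta$.

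The main obstacle is therefore the combinatorial bound $\max_\beta L_\beta\le\binom{2d-1}{d}d^d$, i.e., a sharp row-sum estimate for $M^{-1}$. I would attack this by expanding $\alpha^\beta=\prod_i\alpha_i^{\beta_i}$ in terms of falling factorials $\alpha_i^{\underline{j_i}}$ via Stirling numbers of the second kind; this factors $M$ through the matrix of factorial moments of the multinomial distribution, both of whose factors have transparent inverses, and reduces the bound to estimating an alternating combinatorial sum. The presence of $\binom{2d-1}{d}=|I(d,d)|$ suggests that the worst case is $n=d$, while the factor $d^d$ compensates for the $d^{-d}$ appearing in $M$; extracting the precise constant is the delicate counting step.
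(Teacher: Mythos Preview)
The present paper does not prove Theorem~\ref{thmptas1}: it is quoted from \cite[Theorem~2.2]{KLP06} and used as a black box in Section~\ref{sec5}. So there is no proof here to compare your argument against, beyond the one-line justification of the left inequality via (\ref{reprop1}), which you reproduce correctly.

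For the right inequality, your reduction is valid as far as it goes: the normalization by adding a multiple of $(\sum_i x_i)^d$ and rescaling is legitimate, the grid $\Delta(n,d)$ is indeed unisolvent for $\mathcal{H}_{n,d}$, and the passage from $0\le f\le 1$ on $\Delta_n$ and $\sum_\alpha\lambda_{\alpha,\beta}=1$ to $\max_\beta c_\beta-\min_\beta c_\beta\le\max_\beta L_\beta$ is correct. But this is not a proof: the entire substance of the right inequality is the estimate $\max_\beta L_\beta\le\binom{2d-1}{d}d^d$ on the row-$\ell^1$ norms of the inverse of the Bernstein evaluation matrix, and you have only described how you ``would attack'' it. Factoring $M$ through the Stirling change of basis is a reasonable opening move, but the assertion that ``both factors have transparent inverses'' from which the specific constant $\binom{2d-1}{d}d^d$ drops out is exactly where the difficulty lies---the inverse of the Stirling matrix brings in signed Stirling numbers of the first kind, and controlling the resulting alternating sums uniformly in $n$ and $\beta$ is not routine. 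As written, you have traded the theorem for an unproven lemma of comparable difficulty; the ``delicate counting step'' you mention at the end is the whole theorem.
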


\subsection{Structure of the paper}\label{subsec1.2}
The paper is organized as follows. In Section \ref{sec2}, we give an elementary proof of the PTAS for
 quadratic polynomial optimization over the simplex, that is closely related to a proof given by Nesterov \cite{Nes2003}.
  In Section \ref{sec3}, we refine the known PTAS result for  cubic polynomial optimization over the simplex.
  In addition, we give an elementary proof of the PTAS result for square-free polynomial optimization over the simplex in Section \ref{sec4}.
   Moreover, in Section \ref{sec5}, we provide an alternative proof of the PTAS for general (fixed-degree) polynomial optimization over the simplex.
   We conclude with a discussion of the exact relation between our analysis and that by Nesterov \cite{Nes2003} in Section \ref{sec:conclusion}.
In the {Appendix} we provide a self-contained proof for an explicit description of the moments of the multinomial distribution in terms of the Stirling numbers of the second kind.

\subsection{Notation}\label{subsec1.3}
Throughout we use the notation $[n]=\{1,2,\ldots,n\}$ and $\oN^n$ is the set of all nonnegative integral vectors. For $\alpha\in\oN^n$, we define $|\alpha|=\sum_{i=1}^n\alpha_i$ and  $\alpha!=\alpha_1!\alpha_2!\cdots\alpha_n!$. 
For two vectors $\alpha,\beta\in\oN^n$, the inequality $\alpha\le\beta$ is coordinate-wise and means that  $\alpha_i\le\beta_i$ for any $i=1,\dots,n$. We  let $e_1,\dots,e_n$ denote the standard unit vectors in $\oR^n$. Moreover, for $I\subseteq [n]$ we set $e_I=\sum_{i\in I} e_i$ and we let $e$ denote the all-ones vector in $\oR^n$.
As before, $I(n,d)=\{\alpha\in \oN^n: |\alpha|=d\}$ and $\mathcal{H}_{n,d}$ denotes the set of all multivariate homogeneous polynomials in $n$ variables with degree $d$. Monomials in $\mathcal{H}_{n,d}$ are denoted as $x^{\alpha}=\prod_{i=1}^nx_i^{\alpha_i}$ for $\alpha\in I(n,d)$, while for $I\subseteq[n]$, we use the notation $x^{I}=\prod_{i\in I}x_i$.
Finally, for $\beta\in \oN^n$, we also use $\phi_\beta$ to denote the monomial $x^\beta$, i.e., we set $\phi_\beta(x)=x^\beta$.

\section{PTAS for  quadratic polynomial optimization over the simplex}\label{sec2}

\noindent 
We first recall the explicit Bernstein approximation of the monomials  of degree at most two, i.e., we compute $B_r(\phi_{e_i}),$ $ B_r(\phi_{2e_i})$ and $B_r(\phi_{e_i+e_j})$. 
We give a proof for clarity.

\begin{lemma}\label{propbernstein}
For $r\ge 1$ one has  $B_r(\phi_{e_i})(x)=x_i, B_r(\phi_{2e_i})(x)={1\over r}x_i(1-x_i)+x_i^2$, and $B_r(\phi_{e_i+e_j})(x)={r-1\over r}x_ix_j$ for all $x\in \Delta_n$.
\end{lemma}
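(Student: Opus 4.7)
My plan is to prove all three identities by directly applying the definition of the Bernstein approximation and performing a standard factorial re-indexing trick, using that $\sum_{i=1}^n x_i = 1$ on $\Delta_n$ to eliminate the remaining multinomial sum.

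For the linear monomial $\phi_{e_i}$, I would start from
\[
B_r(\phi_{e_i})(x) = \sum_{\alpha\in I(n,r)} \frac{\alpha_i}{r}\,\frac{r!}{\alpha!}\,x^\alpha,
\]
and use the identity $\tfrac{\alpha_i}{r}\tfrac{r!}{\alpha!} = \tfrac{(r-1)!}{(\alpha_i-1)!\prod_{j\ne i}\alpha_j!}$ for $\alpha_i\ge 1$ (terms with $\alpha_i=0$ vanish). Substituting $\alpha' = \alpha - e_i \in I(n,r-1)$ and factoring out $x_i$ reduces the sum to $x_i\bigl(\sum_j x_j\bigr)^{r-1}$ by the multinomial theorem, which equals $x_i$ on $\Delta_n$.

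For $\phi_{2e_i}$, the key trick is to write $\alpha_i^2 = \alpha_i(\alpha_i-1) + \alpha_i$ so that $B_r(\phi_{2e_i})(x)$ splits into two sums. The $\alpha_i$ part gives $\tfrac{1}{r}B_r(\phi_{e_i})(x) = \tfrac{x_i}{r}$ by the previous step. For the $\alpha_i(\alpha_i-1)$ part, when $\alpha_i\ge 2$,
\[
\frac{\alpha_i(\alpha_i-1)}{r^2}\frac{r!}{\alpha!} = \frac{r-1}{r}\,\frac{(r-2)!}{(\alpha_i-2)!\prod_{j\ne i}\alpha_j!},
\]
and the re-indexing $\alpha'' = \alpha - 2e_i \in I(n,r-2)$ again collapses the sum via the multinomial theorem to $\tfrac{r-1}{r}x_i^2$. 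Adding the two pieces gives $x_i^2 + \tfrac{x_i(1-x_i)}{r}$, matching the claim. The identity for $\phi_{e_i+e_j}$ (with $i\ne j$) is entirely analogous: use $\tfrac{\alpha_i\alpha_j}{r^2}\tfrac{r!}{\alpha!} = \tfrac{r-1}{r}\tfrac{(r-2)!}{(\alpha_i-1)!(\alpha_j-1)!\prod_{k\ne i,j}\alpha_k!}$ and substitute $\alpha''' = \alpha - e_i - e_j \in I(n,r-2)$.

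There is no real obstacle here; the only thing to watch is the edge cases $r=1$ (where $I(n,r-2)$ is empty) and $r=2$ for the quadratic formulas. In each case a direct check of the definition agrees with the claimed formula (e.g.\ for $r=1$, $B_1(\phi_{2e_i})(x) = \sum_k (e_k)_i^2 x_k = x_i$, which equals $x_i(1-x_i)/1 + x_i^2$; and $B_1(\phi_{e_i+e_j}) = 0 = \tfrac{1-1}{1}x_ix_j$). A cleaner way to handle all cases uniformly is to observe that for $X=(X_1,\dots,X_n)$ multinomially distributed with parameters $r$ and $x\in\Delta_n$, the Bernstein value $B_r(\phi_\beta)(x)$ equals $r^{-|\beta|}\,\mathbb{E}[\prod_i X_i^{\beta_i}]$, and the three identities are standard first and second moment formulas for the multinomial distribution, which the paper anticipates using later (as indicated by the appendix on multinomial moments).
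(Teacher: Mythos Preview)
Your proposal is correct and follows essentially the same route as the paper: expand the Bernstein sum from the definition, use the factorial/re-indexing identity (equivalently, $\alpha_i^2=\alpha_i(\alpha_i-1)+\alpha_i$) to peel off factors of $x_i$ or $x_ix_j$, and collapse the remaining sum via the multinomial theorem. The paper handles the edge cases $r=1,2$ implicitly via the convention that an empty sum equals $0$, whereas you verify them directly; your closing remark on the multinomial-moment interpretation is an extra observation not used in the paper's proof of this lemma.
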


\begin{proof}
By the definition (\ref{bernstein-approximation}), one has:
\begin{eqnarray*}
B_r(\phi_{e_i})(x)&=&\sum_{\alpha\in I(n,r)}{\alpha_i\over r}{r!\over \alpha!}x^{\alpha}= x_i \sum_{\beta\in I(n,r-1)}{(r-1)!\over \beta!}x^{\beta}
= x_i (\sum_{i=1}^n x_i)^{r-1}
= x_i,\\
B_r(\phi_{2e_i})(x)&=&\sum_{\alpha\in I(n,r)}{\alpha_i^2\over r^2}{r!\over \alpha!}x^{\alpha}
= {r-1\over r}x_i^2 \sum_{\beta\in I(n,r-2)}{(r-2)!\over \beta!}x^{\beta}+{1\over r}x_i \sum_{\beta\in I(n,r-1)}{(r-1)!\over \beta!}x^{\beta} \\
&=& {r-1\over r} x_i^2 +{1\over r}x_i = {1\over r}x_i(1-x_i)+x_i^2,\\
B_r(\phi_{e_i+e_j})(x)&=&\sum_{\alpha\in I(n,r)}{\alpha_i\alpha_j\over r^2}{r!\over \alpha!}x^{\alpha}
= {r-1\over r}x_ix_j \sum_{\beta\in I(n,r-2)}{(r-2)!\over \beta!}x^{\beta}
= {r-1\over r}x_ix_j,
\end{eqnarray*}
where we have  used at several places the multinomial theorem (and the fact that an empty summation is equal to 0).
\qed
\end{proof}

\noindent
Consider now a quadratic polynomial $f=x^TQx\in \mathcal{H}_{n,2}$.  By Lemma \ref{propbernstein}, its Bernstein approximation on the simplex is given by
\begin{equation}\label{brf}
B_r(f)(x)={1\over r}\sum_{i=1}^nQ_{ii}x_i+(1-{1\over r})f(x) \ \ \forall x\in \Delta_n.
\end{equation}

\begin{theorem}\label{berineq}
For any polynomial $f=x^TQx\in \mathcal{H}_{n,2}$ and $r\ge 1$, one has
$${\max_{x\in \Delta_n}\{B_r(f)(x) -f(x)\}\le}
{Q_{\max}-\underline{f}\over r} \le
{\overline{f}-\underline{f}\over r}.$$ 
setting $Q_{\max}=\max_{i\in [n]} Q_{ii}$.
\end{theorem}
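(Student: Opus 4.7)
The plan is to start from the explicit formula for the Bernstein approximation of a quadratic polynomial given by equation (\ref{brf}), namely
$$B_r(f)(x) = \frac{1}{r}\sum_{i=1}^n Q_{ii} x_i + \left(1-\frac{1}{r}\right) f(x),$$
and simply rearrange it to obtain
$$B_r(f)(x) - f(x) = \frac{1}{r}\Bigl(\sum_{i=1}^n Q_{ii} x_i - f(x)\Bigr).$$
The entire theorem then reduces to bounding the quantity in parentheses uniformly in $x \in \Delta_n$.

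For the upper bound $Q_{\max} - \underline{f}$, I would use two very simple observations. First, since $x \in \Delta_n$ means $x$ has nonnegative entries summing to $1$, the quantity $\sum_i Q_{ii} x_i$ is a convex combination of the diagonal entries $Q_{ii}$, so it is at most $Q_{\max} = \max_i Q_{ii}$. Second, trivially $f(x) \ge \underline{f}$, hence $-f(x) \le -\underline{f}$. Adding these two inequalities yields $\sum_i Q_{ii} x_i - f(x) \le Q_{\max} - \underline{f}$, and dividing by $r$ gives the first claimed inequality.

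For the second inequality $Q_{\max} - \underline{f} \le \overline{f} - \underline{f}$, I would observe that each diagonal entry satisfies $Q_{ii} = e_i^T Q e_i = f(e_i)$, where $e_i \in \Delta_n$ is the $i$-th standard unit vector, so that $Q_{ii} \le \overline{f}$ for every $i$. Consequently $Q_{\max} \le \overline{f}$, and subtracting $\underline{f}$ from both sides finishes the proof.

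The argument is essentially a one-line calculation once (\ref{brf}) is in hand, so there is no serious obstacle; the only mildly non-obvious point is the identification $Q_{ii} = f(e_i)$ needed to pass from $Q_{\max}$ to $\overline{f}$, which is what makes the sharper intermediate bound $\tfrac{1}{r}(Q_{\max} - \underline{f})$ meaningful.
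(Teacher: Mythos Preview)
Your proof is correct and essentially identical to the paper's: both start from the explicit formula (\ref{brf}), bound the linear term $\sum_i Q_{ii}x_i$ by $Q_{\max}$ via the convex-combination argument, bound $-f(x)$ by $-\underline f$, and finally use $Q_{ii}=f(e_i)\le\overline f$ to pass from $Q_{\max}$ to $\overline f$. The only cosmetic difference is that the paper multiplies through by $r$ and writes $(r-1)f(x)\le rf(x)-\underline f$, whereas you isolate $B_r(f)(x)-f(x)$ first; the underlying steps are the same.
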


\begin{proof}
%
Using (\ref{brf}), one obtains that
\begin{eqnarray*}
rB_r(f)(x)&=&\sum_{i=1}^nQ_{ii}x_i+(r-1)f(x)\\
&\le & \max_{x\in\Delta_n}\sum_{i=1}^nQ_{ii}x_i+rf(x)-\min_{x\in\Delta_n}f(x)\\
&=& \max_{i}Q_{ii}-\underline{f}+rf(x)\\
&\le & \overline{f}-\underline{f}+rf(x),
\end{eqnarray*}
where in the last inequality we have used the fact that $\max_i Q_{ii} \le \overline{f}$, since $Q_{ii}=f(e_i)\le \overline{f}$ for $i\in [n]$.
This gives the  two right-most inequalities in the theorem.
\qed
\end{proof}

\noindent Combining  Theorem \ref{berineq}  with Lemma \ref{claim1}, we obtain the following corollary, which gives the PTAS result by Bomze and de Klerk \cite[Theorem 3.2]{BK02}.

\begin{corollary}\label{corberineq}
For any polynomial $f=x^TQx\in \mathcal{H}_{n,2}$ and $r\ge 1$, one has $$f_{\Delta(n,r)}-\underline{f}\le {Q_{\max}-\underline{f}\over r} \le {\overline{f}-\underline{f}\over r}.$$
\end{corollary}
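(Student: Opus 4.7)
The statement is a direct corollary, so the plan is to chain together two results already established in the paper: the inequality from Lemma~\ref{claim1} bounding the grid gap by the Bernstein-approximation gap, and the bound from Theorem~\ref{berineq} on the latter.

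First I would invoke the rightmost inequality of \eqref{eqB} in Lemma~\ref{claim1}, namely
\[
f_{\Delta(n,r)}-\underline{f} \le \max_{x\in\Delta_n}\{B_r(f)(x)-f(x)\}.
\]
This is the step that converts an assertion about grid minima, for which we have no direct handle, into an assertion about pointwise deviation of the Bernstein approximation from $f$ on the whole simplex. It is applicable for any polynomial (in particular any quadratic) and any $r\ge 1$, so no additional hypotheses are needed.

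Next I would plug in Theorem~\ref{berineq}, which bounds the right-hand side above by $(Q_{\max}-\underline{f})/r$ and in turn by $(\overline{f}-\underline{f})/r$. Concatenating the two inequalities yields precisely the chain claimed in the corollary. No further calculation is required.

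The only subtlety worth flagging is that the second inequality $Q_{\max}-\underline{f}\le \overline{f}-\underline{f}$ relies on $Q_{ii}=f(e_i)\le\overline{f}$, since $e_i\in\Delta_n$; but this is already embedded in the proof of Theorem~\ref{berineq}, so nothing new is needed here. Consequently, I do not anticipate any genuine obstacle — the corollary is essentially a packaging of Lemma~\ref{claim1} with the quadratic-case estimate of Theorem~\ref{berineq}, whose proofs contain all the real content (the explicit formulas of Lemma~\ref{propbernstein} for $B_r(\phi_{e_i})$, $B_r(\phi_{2e_i})$, $B_r(\phi_{e_i+e_j})$, and the identity \eqref{brf}).
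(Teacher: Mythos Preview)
Your proposal is correct and matches the paper's own approach exactly: the corollary is obtained by combining Lemma~\ref{claim1} (specifically inequality~\eqref{eqB}) with Theorem~\ref{berineq}, and the paper states precisely this without further argument.
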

We note that the proof given here is completely elementary and much simpler than the original one in \cite{BK02}.
Our proof is, however, closely related to
another proof by Nesterov \cite{Nes2003}, we will give the precise relation in Section \ref{sec:conclusion}.

\begin{example}\label{exquad}
Consider the quadratic polynomial $f=\sum_{i=1}^n x_i^2 \in \mathcal{H}_{n,2}$.
As $f$ is convex, it is easy to check that $\underline{f}={1\over n}$ (attained at $x={1\over n}e$) and $\overline{f}=1$ (attained at any standard unit vector).

For the computation of $f_{\Delta(n,r)}$, 
it is convenient to write $r$ as $r=kn+s$, where $k\ge 0$ and $0\le s<n$. Then we have
$$f_{\Delta(n,r)}={1\over n} + {1\over r^2}{s(n-s)\over n},$$
which is attained at
any point $x\in \Delta(n,r)$ having $n-s$ coordinates equal to $k\over r$ and $s$ coordinates equal to $k+1\over r$.
To see this, pick a minimizer $x\in \Delta(n,r)$. First we claim that $x_i-x_j\le {1\over r}$ for any $i\ne j\in [n]$. Indeed, assume (say) that $x_2-x_1>{1\over r}$.
Then define the new point $x'\in \Delta(n,r)$  by $x'_1=x_1+{1\over r}$, $x_2'=x_2 -{1\over r}$ and $x_i'=x_i$ for all $i\ne 1,2$
and observe that  $f(x')< f(x)$, which contradicts the optimality of $x$. Therefore, the coordinates of $x$ can  take at most two possible values ${h\over r}, {h+1\over r}$ for some $0\le h\le r-1$ and it is easy to see these two values belong to $\{{k\over r},{k+1\over r}\}$.
Hence we obtain that
$$f_{\Delta(n,r)}-\underline{f}={1\over r^2}{s(n-s)\over n} \ \ \text{ and } \
{f_{\Delta(n,r)}-\underline f\over \overline f-\underline f} = {1\over r^2}{s(n-s)\over n-1}.$$
We observe that this latter ratio might be in the order $1\over r$, thus matching the upper bound in Corollary \ref{corberineq} in
terms of the dependence of the error bound on $r$.
 For instance, for $r={3n\over 2}$ (i.e., $k=1$, $s={n\over 2}$), we have that
\begin{equation}\label{eqratior}
{f_{\Delta(n,r)}-\underline{f}}= {1\over 6r-9}( \overline f -\underline f).
\end{equation}
Moreover, we have $B_r(f)(x)= {1\over r} +(1-{1\over r})f(x)$, so that
$$\min_{x\in \Delta_n} B_r(f)(x) -\underline{f}=\max_{x\in \Delta_n} \{B_r(f)(x) -f(x)\} = {1\over r}(\overline{f}-\underline{f}).$$
Hence, equality holds throughout  in the inequalities of Theorem \ref{berineq}, which shows that the upper bound is tight on this example.

\end{example}

\noindent By Example \ref{exquad}, there does not exist any $\epsilon >0$ such that, for any quadratic form $f$,
\[
f_{\Delta(n,r)}-\underline{f} \le \frac{1}{r^{1+\epsilon}}\left(\overline{f} - \underline{f}\right) \quad  \forall r \ge 1,
\]
and thus the error bound in Corollary \ref{corberineq} is tight  in terms of its dependence on $r$.
On the other hand,
one may easily show that, for the polynomial $f$ in Example \ref{exquad},
\[
\rho_r(f) := \frac{f_{\Delta(n,r)}-\underline{f}}{\overline{f} - \underline{f}} \le \frac{n}{4r^2} = O(1/r^2).
\]
Thus, $\limsup_{r\rightarrow \infty} (r^2\rho_r(f)) < \infty$, i.e.\ the asymptotic convergence rate of the sequence $\{\rho_r(f)\}$ for the
 example is {$O(1/r^2)$}. It turns out that this will be the case also for the other polynomials considered in Examples \ref{excubic}, \ref{exsf} and \ref{exdegd}.

\section{PTAS for  cubic polynomial optimization over the simplex}\label{sec3}

\noindent Using similar arguments as for Lemma \ref{propbernstein},  one can compute the Bernstein approximations of the monomials of degree three. Namely, for distinct $i,j,k\in [n]$ and $x\in \Delta_n$,
\begin{eqnarray*}
B_r(\phi_{3e_i})(x)&=&{1\over r^2}x_i+{3(r-1)\over r^2}x_i^2+{(r-1)(r-2)\over r^2}x_i^3,\\
B_r(\phi_{2e_i+e_j})(x)&=&{(r-1)\over r^2}x_ix_j+{(r-1)(r-2)\over r^2}x_i^2x_j,\\
B_r(\phi_{e_i+e_j+e_k})(x)&=&{(r-1)(r-2)\over r^2}x_ix_j x_k.
\end{eqnarray*}

\noindent 
We show the following result.
\begin{theorem}\label{thmcubic}
For any polynomial $f\in \mathcal{H}_{n,3}$ and $r\ge 2$, one has
$$ 
{ \max_{x\in \Delta_n}\{B_r(f)(x)- f(x)\}\le }
 \left({4\over r}-{4\over r^2}\right)(\overline{f}-\underline{f}).$$ 
\end{theorem}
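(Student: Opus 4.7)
The plan is to express $B_r(f)(x)-f(x)$ as an explicit linear combination of $B_2(f)(x)$, a linearization $M(x):=\sum_{i=1}^n f(e_i)\,x_i$, and $f(x)$ itself (with coefficients summing to zero), and then to invoke simple range bounds on each of these three pieces.

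First I would expand $B_r(f)$ linearly over $f=\sum_\beta f_\beta x^\beta$ using the three formulas just stated for $B_r(\phi_{3e_i})$, $B_r(\phi_{2e_i+e_j})$, and $B_r(\phi_{e_i+e_j+e_k})$. Specializing those same formulas to $r=2$ collapses them to $B_2(\phi_{3e_i})(x)=(x_i+3x_i^2)/4$, $B_2(\phi_{2e_i+e_j})(x)=x_ix_j/4$, and $B_2(\phi_{e_i+e_j+e_k})(x)=0$, so that $B_2(f)$ has a particularly compact expression. A direct monomial-by-monomial check over $\beta\in I(n,3)$ then yields the key identity
$$r^2\bigl[B_r(f)(x)-f(x)\bigr]\;=\;4(r-1)\,B_2(f)(x)\;-\;(r-2)\,M(x)\;-\;(3r-2)\,f(x),$$
valid on $\Delta_n$ for every $r\ge 1$; the three coefficients on the right sum to $0$, as they must, since both sides vanish when $f\equiv 1$ on $\Delta_n$.

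Next I would use three elementary range bounds valid on $\Delta_n$: $B_2(f)(x)\le\overline f$, because by \eqref{bernstein-approximation} the polynomial $B_2(f)(x)$ is a convex combination of the values $\{f(\alpha/2):\alpha\in I(n,2)\}\subseteq[\underline f,\overline f]$; $M(x)\ge\underline f$, because $M(x)$ is a convex combination of the vertex values $f(e_1),\dots,f(e_n)\in[\underline f,\overline f]$; and $f(x)\ge\underline f$ trivially. Since $r\ge 2$ forces $r-2\ge 0$ and $3r-2>0$, substituting these bounds into the identity and using $(r-2)+(3r-2)=4(r-1)$ gives
$$r^2\bigl[B_r(f)(x)-f(x)\bigr]\;\le\;4(r-1)\overline f-(r-2)\underline f-(3r-2)\underline f\;=\;4(r-1)(\overline f-\underline f),$$
which upon dividing by $r^2$ is exactly the claimed inequality.

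The one genuinely non-obvious step is recognizing that $B_2(f)$ is the natural third building block alongside the ``obvious'' pieces $M$ and $f$: once it is in hand, the identity is verified by a direct coefficient comparison and each of the three bounds on $B_2(f),M,f$ is immediate. It is precisely this use of the quadratic Bernstein approximation (instead of a purely coefficient-level estimate via Theorem~\ref{thmptas1}) that produces the sharper constant $4(r-1)/r^2$, far better than what one would obtain by applying Theorem~\ref{thm:PTAS}(ii) verbatim with $d=3$.
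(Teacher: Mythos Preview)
Your proof is correct, and the key identity checks out monomial-by-monomial exactly as you describe. The argument, however, is organized rather differently from the paper's.

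The paper works directly at the coefficient level: after expanding $B_r(f)$ as in \eqref{berneqcubic}, it invokes the two pointwise evaluations $f(e_i)\le\overline f$ and $f\bigl((e_i+e_j)/2\bigr)\le\overline f$ to obtain the coefficient bounds $f_i\le\overline f$ and $f_i+f_j+f_{ij}+g_{ij}\le 8\overline f$, then substitutes and simplifies through an intermediate inequality on $\sum_{i<j}(f_{ij}+g_{ij})x_ix_j$. Your approach packages precisely the same information --- values of $f$ at the grid $\Delta(n,2)=\{e_i,(e_i+e_j)/2\}$ --- into the single structural observation that the quadratic-looking part of $r^2 B_r(f)(x)$ is exactly $4(r-1)B_2(f)(x)$, and then uses only the general convex-combination bound $B_2(f)(x)\le\overline f$ (the same principle underlying Lemma~\ref{claim1}). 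The upshot is the same inequality $r^2(B_r(f)-f)\le 4(r-1)(\overline f-\underline f)$, but your route avoids all coefficient chasing and makes transparent \emph{why} the grid $\Delta(n,2)$ is the relevant one. The paper's argument, by contrast, is more hands-on and does not require spotting the $B_2(f)$ identity in advance.
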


\begin{proof}
Consider a cubic  polynomial $f\in \mathcal{H}_{n,3}$ of the form
\begin{equation*}\label{re11}
f=\sum_{i=1}^nf_ix_i^3+\sum_{1\le i<j\le n}(f_{ij}x_ix_j^2+g_{ij}x_i^2x_j)+\sum_{1\le i<j<k\le n}f_{ijk}x_ix_jx_k.
\end{equation*}
Applying the above  description for the Bernstein {approximation} of degree 3 monomials, the Bernstein approximation of $f$ at any  $x\in\Delta_n$ reads
\begin{equation}\label{berneqcubic}
B_r(f)(x)={(r-1)(r-2)\over r^2}f(x)+{1\over r^2}\left[\sum_{i=1}^nf_ix_i+(r-1)\left(\sum_{i=1}^n3f_ix_i^2+\sum_{i<j}(f_{ij}+g_{ij})x_ix_j\right)\right].
\end{equation}
Evaluating $f$ at $e_i$ and at $(e_i+e_j)/2$ yields, respectively, the relations:
\begin{eqnarray}
\underline{f}\le f_i\le\overline{f},\label{re1}\\
f_i+f_j+f_{ij}+g_{ij}\le 8\overline{f}.\label{re2}
\end{eqnarray}
Using (\ref{re2}) and the fact that $\sum_{i=1}^nx_i=1$, one can obtain
\begin{equation}\label{re3}
\sum_{i<j}(f_{ij}+g_{ij})x_ix_j\le\sum_{i<j}(8\overline{f}-f_i-f_j)x_ix_j=8\overline{f}\sum_{i<j}x_ix_j-\sum_{i=1}^n
f_ix_i(1-x_i).
\end{equation}
Combining (\ref{berneqcubic}) and (\ref{re3}), one obtains that, for any $x\in \Delta_n$,
\begin{eqnarray*}
r^2 B_r(f)(x)
& = & (r-1)(r-2)f(x)+\sum_{i=1}^nf_ix_i+(r-1)\left(\sum_{i=1}^n3f_ix_i^2+\sum_{i<j}(f_{ij}+g_{ij})x_ix_j\right)\\
&\le&
(r-1)(r-2)f(x)-(r-2)\sum_{i=1}^nf_ix_i+(r-1)\left(\sum_{i=1}^n4f_ix_i^2+8\overline{f}\sum_{i<j}x_ix_j\right).
\end{eqnarray*}
We now use (\ref{re1}) to bound the two inner summations as follows:
\begin{eqnarray*}
-\sum_i f_ix_i\le -\underline{f}\sum_ix_i=-\underline{f}\ \text{ and } \
\sum_{i=1}^n4f_ix_i^2+8\overline{f}\sum_{i<j}x_ix_j \le 4 \overline{f} (\sum_ix_i)^2=4\overline{f}.
\end{eqnarray*}
This implies:
{
\begin{eqnarray*}
r^2 (B_r(f)(x)-f(x))&\le&
-(3r-2)\underline{f}  -(r-2)\underline{f}
  + 4(r-1)\overline{f}  = 4(r-1)(\overline{f}-\underline{f}),
\end{eqnarray*}
}
which concludes the proof.
\qed
\end{proof}

\noindent  Combining Theorem \ref{thmcubic} with Lemma \ref{claim1}, we obtain the following error bound.
\begin{corollary}
For any polynomial $f\in \mathcal{H}_{n,3}$ and $r\ge 2$, one has
\begin{equation*}\label{ptascube}
f_{\Delta(n,r)}-\underline{f}\le \left({4\over r}-{4\over r^2}\right)(\overline{f}-\underline{f}).
\end{equation*}
\end{corollary}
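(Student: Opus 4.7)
The plan is to derive the corollary as an immediate two-step concatenation of results already established just above, with no further computation required. All the technical content sits in Theorem \ref{thmcubic} (the uniform estimate on $B_r(f)-f$ for cubic $f$) and in Lemma \ref{claim1} (the comparison of $f_{\Delta(n,r)}-\underline{f}$ with such a sup-norm error). The corollary simply transcribes the former into the language of grid optimization via the latter.

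Concretely, I would first invoke the rightmost inequality of (\ref{eqB}) in Lemma \ref{claim1}, which holds for any $f\in\mathcal{H}_{n,d}$ and any $r\ge 1$, to write
\[
f_{\Delta(n,r)}-\underline{f}\ \le\ \max_{x\in\Delta_n}\{B_r(f)(x)-f(x)\}.
\]
Second, specializing to $f\in\mathcal{H}_{n,3}$ and $r\ge 2$, Theorem \ref{thmcubic} bounds the right-hand side by $\bigl(\tfrac{4}{r}-\tfrac{4}{r^2}\bigr)(\overline{f}-\underline{f})$. Transitivity of $\le$ then yields the desired inequality, and one checks that the regime of validity ($r\ge 2$) is inherited from Theorem \ref{thmcubic}.

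There is effectively no obstacle at this stage: the only nontrivial step, namely the estimate of $\max_{x\in\Delta_n}\{B_r(f)(x)-f(x)\}$, was already carried out in Theorem \ref{thmcubic} using the explicit Bernstein images of the cubic monomials together with the bounds $f(e_i)\le\overline{f}$ and $f((e_i+e_j)/2)\le\overline{f}$. Once that uniform estimate is in hand, the passage from $\max_{x\in\Delta_n}\{B_r(f)(x)-f(x)\}$ to $f_{\Delta(n,r)}-\underline{f}$ is free, since by Lemma \ref{claim1} the grid minimum is a lower bound on $\min_{x\in\Delta_n}B_r(f)(x)$. Thus the corollary follows by a one-line chaining argument.
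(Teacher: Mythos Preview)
Your proof is correct and matches the paper's approach exactly: the paper derives this corollary by the same one-line combination of Lemma \ref{claim1} (specifically the chain (\ref{eqB})) with Theorem \ref{thmcubic}.
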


\noindent
This result is a bit stronger than the result by de Klerk et al. \cite[Theorem 3.3]{KLP06}, which states that $f_{\Delta(n,r)}-\underline{f}\le {4\over r}(\overline{f}-\underline{f})$.

\begin{example}\label{excubic}
Consider the cubic polynomial $f=x_1^3+x_2^3\in \mathcal{H}_{2,3}$. One can check that
$\overline{f}=1,\underline{f}={1\over 4}$,
\begin{displaymath}
f_{\Delta(2,r)}=\left\{ \begin{array}{ll}
1/4 & \text{if $r$ is even,}\\
{1\over 4}+{3\over 4r^2} & \text{if $r$ is odd.}
\end{array} \right.
\end{displaymath}
Moreover, one can check that
$B_r(f)(x)=1+\left({3\over r}-3\right)x_1x_2$ and
$\min_{x\in\Delta_2}B_r(f)(x)={1\over 4}+{3\over 4r}.$
Hence,  for any integer $r\ge 2$, one has strict inequality $\min_{x\in\Delta_2}B_r(f)(x)>f_{\Delta(2,r)}$. Moreover, for $r\ge 2$,
$$\min_{x\in \Delta_2} B_r(f)(x)-\underline{f} = \max_{x\in \Delta_2}\{B_r(f)(x)-f(x)\}= {3\over 4r} = {1\over r}(\overline{f}-\underline{f}) < ({4\over r}-{4\over r^2})(\overline{f}-\underline{f}).$$
 On the other hand, for odd $r$,  the range  $f_{\Delta(2,r)}-\underline{f}$ is equal to $ {3\over 4r^2}={1\over r^2}(\overline{f}-\underline{f})$ and thus grows proportionally to  $1\over r^2$.
\end{example}

\section{PTAS for square-free polynomial optimization over the simplex}\label{sec4}

\noindent 
Here we consider square-free (aka multilinear) polynomials, involving only monomials $x^I$ for $I\subseteq [n]$.
The Bernstein approximation of
 the square-free monomial $\phi_{e_I}(x):=x^I$, with $d=|I|$,  is given by
\begin{eqnarray*}\label{propbernstein2}
B_r(\phi_{e_I})(x)=\sum_{\alpha\in I(n,r)}{\alpha^I\over r^d}{r!\over \alpha!}x^{\alpha}
= {r^{\underline{d}}\over r^{d}}x^I \sum_{\alpha\in I(n,r-d)}{(r-d)!\over \alpha!}x^{\alpha}
=  {r^{\underline{d}}\over r^{d}}x^I(\sum_ix_i)^{r-d}=
 {r^{\underline{d}}\over r^{d}}x^I \ \  
\end{eqnarray*}
for  $x\in \Delta_n$. Recall that, for an integer $r\ge 1$,  $r^{\underline d}=r(r-1)\cdots (r-d+1)$ and observe that $r^{\underline d}=0$ if $r<d$.
\noindent Hence the Bernstein approximation of  the square-free polynomial $f=\sum_{I\subseteq [n],|I|=d}f_Ix^I$ satisfies
 \begin{equation*}\label{eqsf}
B_r(f)(x)={r^{\underline{d}}\over r^{d}}f(x)  \ \ \ \forall x\in \Delta_n,
\end{equation*}
which implies the following identities:
\begin{equation}\label{eqsf1}
{\min_{x\in \Delta_n} B_r(f)(x) -\underline {f} = \max_{x\in \Delta_n} \{B_r(f)(x)-f(x)\} =-\left(1-{r^{\underline d}\over r^d}\right)\underline{f}. }
\end{equation}

\begin{theorem}\label{thmsf}
For any square-free polynomial $f\in \mathcal{H}_{n,d}$ and $r\ge 1$, one has
\begin{equation*}
{ \max_{x\in \Delta_n} \{B_r(f)(x)-f(x)\} }\le  \left(1-{r^{\underline{d}}\over r^{d}}\right)(\overline{f}-\underline{f})\ \ \ \forall x\in \Delta_n.
\end{equation*}
\end{theorem}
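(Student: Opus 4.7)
The plan is to leverage the identity $(\ref{eqsf1})$ that is already established immediately before the statement of the theorem. Since $B_r(f)(x) = (r^{\underline d}/r^d)\, f(x)$ for square-free $f$, we get
\[
\max_{x\in\Delta_n}\{B_r(f)(x)-f(x)\} \;=\; -\left(1 - \frac{r^{\underline d}}{r^d}\right)\underline{f},
\]
and, since $r^{\underline d} = r(r-1)\cdots (r-d+1)\le r^d$, the scalar $1 - r^{\underline d}/r^d$ is nonnegative. Consequently, the inequality claimed in the theorem reduces to the single elementary claim $-\underline f \le \overline f - \underline f$, i.e., $\overline f \ge 0$.

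To establish $\overline f \ge 0$, it is enough to evaluate $f$ at a standard unit vector. For a square-free homogeneous polynomial $f = \sum_{I\subseteq [n],\, |I|=d} f_I\, x^I$ of degree $d\ge 2$, each monomial $x^I = \prod_{i\in I} x_i$ involves at least two distinct variables, so $x^I(e_j) = 0$ for every $j\in [n]$. Thus $f(e_j) = 0$ for every $j$, which yields $0 = f(e_j) \le \overline f$ and completes the argument in this case.

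The degenerate case $d = 1$ is immediate: then $r^{\underline 1}/r^1 = 1$, so the right-hand side of the desired inequality equals $0$, and one also sees directly (via Lemma \ref{propbernstein}) that $B_r(f) = f$, so the left-hand side is $0$ as well.

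I do not foresee a genuine obstacle: the heavy lifting has already been done in establishing $(\ref{eqsf1})$, and the only further observation required is the corner-vanishing property of square-free monomials of degree at least two, which forces $\overline f \ge 0$ and closes the gap between the exact identity for $\max\{B_r(f)-f\}$ and the stated upper bound in terms of the range $\overline f - \underline f$.
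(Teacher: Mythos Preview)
Your proof is correct and follows essentially the same approach as the paper: both arguments invoke the identity (\ref{eqsf1}), dispose of the case $d=1$ directly, and for $d\ge 2$ use the observation that $f(e_i)=0$ for each $i\in[n]$ to conclude $\overline f\ge 0$. Your write-up simply spells out in more detail what the paper states in two lines.
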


\begin{proof}
We use (\ref{eqsf1}). For degree $d=1$ the result is clear and, for $d\ge 2$,  we use the fact that $\overline{f}\ge 0$ since $f(e_i)=0$ for any $i\in [n]$.\qed
\end{proof}

\noindent Combining with Lemma \ref{claim1}  we obtain the following error bound.
\begin{corollary}
For any square-free polynomial $f\in \mathcal{H}_{n,d}$ and $r\ge 1$, one has
\[
\label{square free PTAS}
f_{\Delta(n,r)}-\underline{f}\le \left(1-{r^{\underline{d}}\over r^{d}}\right)(\overline{f}-\underline{f}).
\]
\end{corollary}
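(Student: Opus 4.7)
The plan is to chain two already-proven ingredients. First I would invoke the right-most inequality in (\ref{eqB}) of Lemma~\ref{claim1}, which gives
\[
f_{\Delta(n,r)} - \underline{f} \;\le\; \max_{x\in \Delta_n}\{B_r(f)(x) - f(x)\}.
\]
Then I would apply Theorem~\ref{thmsf} directly to upper bound this maximum by $\bigl(1 - r^{\underline{d}}/r^{d}\bigr)(\overline{f}-\underline{f})$, which is precisely the asserted inequality. So the derivation is a one-line composition of two facts already in hand.

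The conceptual reason that the square-free case collapses so cleanly is the identity $B_r(f)(x) = (r^{\underline{d}}/r^{d})\,f(x)$ on $\Delta_n$, obtained just before Theorem~\ref{thmsf} from the multinomial theorem. Because $B_r(f)$ is merely a non-negative scalar multiple of $f$ on the simplex, the quantity $\max_{x\in \Delta_n}\{B_r(f)(x)-f(x)\}$ equals $-\bigl(1 - r^{\underline{d}}/r^{d}\bigr)\underline{f}$ (this is exactly \eqref{eqsf1}), and the passage from this expression to $\bigl(1 - r^{\underline{d}}/r^{d}\bigr)(\overline{f}-\underline{f})$ relies only on $\overline{f} \ge 0$. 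That non-negativity is the sole content of the short proof of Theorem~\ref{thmsf}: when $d \ge 2$ every square-free monomial $x^I$ with $|I|=d$ vanishes at every standard unit vector, so $\overline{f} \ge f(e_i) = 0$; and the case $d=1$ is trivial since then $B_r(f) = f$ and both sides are zero.

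There is essentially no obstacle at the level of the corollary itself: all the real work has already been absorbed into Lemma~\ref{claim1} and Theorem~\ref{thmsf}. The only remark worth flagging is that the bound one really proves, $-\bigl(1 - r^{\underline{d}}/r^{d}\bigr)\underline{f}$, is strictly tighter than the stated right-hand side whenever $\underline{f} > 0$; but expressing the error in the normalized form $(\overline{f}-\underline{f})$ is the natural choice for the PTAS interpretation and matches the shape of the error bounds in Theorem~\ref{thm:PTAS}(ii), so this slackness is the right price to pay.
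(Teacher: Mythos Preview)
Your proposal is correct and matches the paper's approach exactly: the paper simply states ``Combining with Lemma~\ref{claim1} we obtain the following error bound,'' which is precisely the chain Lemma~\ref{claim1}~$\to$~Theorem~\ref{thmsf} that you spell out. Your additional commentary on why the square-free case collapses (via the scalar identity $B_r(f)=\tfrac{r^{\underline d}}{r^d}f$ and $\overline f\ge 0$) accurately recapitulates the content of the paragraph preceding Theorem~\ref{thmsf} and its short proof.
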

\noindent
This  result was first shown by Nesterov \cite[Theorem 2]{Nes2003} (see also De Klerk, Laurent, and Parrilo \cite[Remark 3.4]{KLP06}).
In fact, our proof is again closely related to the one by Nesterov; see Section \ref{sec:conclusion} for the details.

\begin{example}\label{exsf}
Consider the square-free polynomial $f=-x_1x_2$. Then,  $B_r(f)(x)=-{r-1\over r}x_1x_2$ and
one can check that
$\overline{f}=0,$ $\underline{f}=-{1\over 4}$, and $ \min_{x\in\Delta_2}B_r(f)(x)=-{1\over 4}{r-1\over r}.$ Moreover,
\begin{displaymath}
f_{\Delta(2,r)}=\left\{ \begin{array}{ll}
-{1\over 4} & \text{if $r$ is even,}\\
-{1\over 4}+ {1\over 4r^2} & \text{if $r$ is odd.}
\end{array} \right.
\end{displaymath}
Hence, for any integer $r\ge 2$, one has strict inequality: $\min_{x\in\Delta_2}B_r(f)(x)>f_{\Delta(n,r)}$.
Moreover, as $\min_{x\in \Delta_2} B_r(f)(x)-\underline{f} = \max_{x\in \Delta_2}\{B_r(f)(x)-f(x)\}= {1\over 4r} = {1\over r}(\overline{f}-\underline{f}),$  the upper bound from Theorem \ref{thmsf} is tight on this example. On the other hand,   $f_{\Delta(2,r)}-\underline{f}= {1\over 4r^2}= {1\over r^2}(\overline{f}-\underline{f})$ for odd $r$, and thus the range  $f_{\Delta(2,r)}-\underline{f}$ grows proportionally to $1\over r^2$.
\end{example}


\section{PTAS for  general polynomial optimization over the simplex}\label{sec5}

\noindent
In this section we deal with the minimization of an arbitrary polynomial $f\in \mathcal{H}_{n,d}$. In order to be able to bound  the minimum of $B_r(f)$ over $\Delta_n$ 
we need an explicit description of the Bernstein approximation of $f$.

\subsection{Bernstein approximation over the simplex of an arbitrary monomial}
Here we work out an explicit description of the  Bernstein approximation
 of  arbitrary monomials
$\phi_\beta(x)= x^\beta$ ($\beta\in I(n,d)$).
The key ingredient is to express it in terms of  the moments of the multinomial distribution.

Fix $x=(x_1,\ldots,x_n)\in \Delta_n$ and  consider the multinomial distribution with $n$ categories and $r$ independent trials,
 where the probability for the $i$-th category is given by $x_i$. Then, given $\alpha\in I(n,r)$, the probability of drawing $\alpha_i$ times the $i$-th category for each $i\in [n]$  is equal to ${r!\over \alpha!} x^\alpha$. Therefore, for $\beta\in \oN^n$,  the $\beta$-th moment of this multinomial distribution is given by
\begin{equation*}\label{relmom}
m ^{\beta}_{(n,r)}:=\sum_{\alpha\in I(n,r)}{\alpha}^{\beta}{r!\over \alpha!}x^{\alpha}.
\end{equation*}
Comparing with the definition of the Bernstein approximation of $\phi_\beta(x)=x^\beta$ we find the identity
 \begin{equation*}\label{eqmom}
 B_r(\phi_\beta)(x)=\sum_{\alpha\in I(n,r)}({\alpha\over r})^\beta {r!\over \alpha!}x^\beta = {1\over r^{|\beta|}} m ^{\beta}_{(n,r)}.
 \end{equation*}
  Combining \cite[relation (34.18)]{JKB97} and \cite[relation (35.5)]{JKB97}, we can obtain an  explicit formula for the moments  $m ^{\beta}_{(n,r)}$ of the multinomial distribution in terms of the Stirling numbers of the second kind.
  Recall that, for integers $n,k\in \oN$, the {\em Stirling number of the second kind} $S(n,k)$ counts the number of ways of partitioning a set of $n$ objects into $k$ nonempty subsets.
  Thus $S(n,k)=0$ if $k>n$, $S(n,0)=0$ if $n\ge 1$, and $S(0,0)=1$ by convention.

\begin{theorem}\label{theomom}
For $\beta\in \oN^n$, one has
$$m ^{\beta}_{(n,r)}=\sum_{\alpha\in \oN^n:
 \alpha\le\beta}r^{\underline{|\alpha|}}x^{\alpha}\prod_{i=1}^nS(\beta_i,\alpha_i),$$
where $S(\beta_i,\alpha_i)$ are Stirling numbers of the second kind.
\end{theorem}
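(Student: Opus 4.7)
The plan is to reduce the theorem to the classical single-variable identity expressing ordinary powers in terms of falling factorials via Stirling numbers of the second kind, namely $k^m = \sum_{j=0}^{m} S(m,j) k^{\underline j}$ for all nonnegative integers $k,m$. Once this is applied to each factor $\alpha_i^{\beta_i}$, the remaining work is a multi-index computation that collapses through the multinomial theorem.

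First, I would apply the Stirling identity coordinatewise. Writing $\alpha^\beta = \prod_{i=1}^n \alpha_i^{\beta_i}$ and substituting $\alpha_i^{\beta_i} = \sum_{\alpha'_i=0}^{\beta_i} S(\beta_i,\alpha'_i)\,\alpha_i^{\underline{\alpha'_i}}$, expanding the product over $i$ yields
\begin{equation*}
\alpha^\beta \;=\; \sum_{\alpha'\in\oN^n:\,\alpha'\le\beta}\Big(\prod_{i=1}^n S(\beta_i,\alpha'_i)\Big)\prod_{i=1}^n \alpha_i^{\underline{\alpha'_i}}.
\end{equation*}
Substituting this into $m^\beta_{(n,r)}=\sum_{\alpha\in I(n,r)}\alpha^\beta\tfrac{r!}{\alpha!}x^\alpha$ and interchanging the two summations, the proof reduces to evaluating, for each fixed $\alpha'\le\beta$, the inner sum
\begin{equation*}
T(\alpha') \;:=\; \sum_{\alpha\in I(n,r)}\Big(\prod_{i=1}^n \alpha_i^{\underline{\alpha'_i}}\Big)\frac{r!}{\alpha!}\,x^\alpha.
\end{equation*}

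Next I would simplify $T(\alpha')$ using the crucial cancellation $\alpha_i^{\underline{\alpha'_i}}/\alpha_i! = 1/(\alpha_i-\alpha'_i)!$ (valid because both sides vanish when $\alpha_i<\alpha'_i$). This restricts the sum to indices with $\alpha\ge\alpha'$, so the substitution $\gamma_i=\alpha_i-\alpha'_i$ transforms $T(\alpha')$ into
\begin{equation*}
T(\alpha') \;=\; r!\,x^{\alpha'}\sum_{\gamma\in I(n,r-|\alpha'|)}\frac{x^\gamma}{\gamma!} \;=\; \frac{r!}{(r-|\alpha'|)!}\,x^{\alpha'} \;=\; r^{\underline{|\alpha'|}}\,x^{\alpha'},
\end{equation*}
where the middle equality is the multinomial theorem applied to $(\sum_i x_i)^{r-|\alpha'|}=1$ (with the convention that an empty sum is $0$, matching the fact that $r^{\underline{|\alpha'|}}=0$ when $|\alpha'|>r$). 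Plugging $T(\alpha')$ back into the double sum gives the claimed formula.

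The argument is essentially bookkeeping; the only step requiring care is the coordinatewise expansion, because one must keep track of the multi-index range $\alpha'\le\beta$ and verify that the boundary cases (when some $\alpha_i<\alpha'_i$, or when $|\alpha'|>r$) are consistently handled by the convention $S(0,0)=1$, $S(m,0)=0$ for $m\ge 1$, and $r^{\underline{k}}=0$ for $k>r$. This is where I would be most careful, but no genuinely hard estimate is involved; the result is a clean algebraic identity.
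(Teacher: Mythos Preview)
Your argument is correct and considerably more direct than the paper's. The paper proves Theorem~\ref{theomom} in the Appendix via the moment generating function $M_x^r(t)=\big(\sum_i x_ie^{t_i}\big)^r$: it first quotes the binomial case from \cite{AK08}, then computes the single-variable derivatives $\partial^{\beta_i}M_x^r/\partial t_i^{\beta_i}$ at a general point $t$ by induction on $\beta_i$ using the Stirling recurrence $S(\beta+1,\alpha)=S(\beta,\alpha-1)+\alpha S(\beta,\alpha)$, and finally inducts on the size of the support of $\beta$ to obtain the mixed derivative at $t=0$. Your route bypasses all of this: you invoke the classical identity $k^m=\sum_{j}S(m,j)k^{\underline j}$ coordinatewise, interchange the two finite sums, and collapse the inner sum via the cancellation $\alpha_i^{\underline{\alpha'_i}}/\alpha_i!=1/(\alpha_i-\alpha'_i)!$ and the multinomial theorem. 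What your approach buys is brevity and self-containment (no external citation, no MGF, no double induction); what the paper's approach buys is a slightly more general intermediate result (the derivative formula at arbitrary $t$, not just $t=0$), though that extra generality is not used elsewhere. Your handling of the boundary cases (empty $I(n,r-|\alpha'|)$ when $|\alpha'|>r$, matching $r^{\underline{|\alpha'|}}=0$) is fine.
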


\noindent Therefore, we can deduce the explicit formula of the Bernstein approximation for any monomial.

\begin{corollary}\label{thmbernstein}
For any monomial $\phi_\beta(x)=x^{\beta}$,  one has
$$B_r(\phi_\beta)(x)={1\over r^{|\beta|}}\sum_{\alpha\in \oN^n: \alpha\le\beta}r^{\underline{|\alpha|}}x^{\alpha}\prod_{i=1}^nS(\beta_i,\alpha_i)\ \ \forall x\in \Delta_n.$$
\end{corollary}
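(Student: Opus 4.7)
The corollary is an immediate combination of two facts already recorded in the excerpt, so the plan is essentially a one-line substitution, and my job is just to make that substitution explicit and to double-check that no hidden assumption is being used.

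The plan is to start from the identity
\[
B_r(\phi_\beta)(x) \;=\; \frac{1}{r^{|\beta|}}\, m^{\beta}_{(n,r)},
\]
which was derived in the paragraph just before Theorem \ref{theomom} by comparing the definition \eqref{bernstein-approximation} of $B_r(\phi_\beta)$ with the definition of the $\beta$-th moment $m^\beta_{(n,r)}$ of the multinomial distribution on $n$ categories with $r$ trials and probability vector $x \in \Delta_n$. Concretely, both sums are over $\alpha \in I(n,r)$, both have weight $\frac{r!}{\alpha!} x^\alpha$, and the $\beta$-th moment carries a factor $\alpha^\beta$ while the Bernstein approximation carries $(\alpha/r)^\beta$, producing exactly the factor $r^{-|\beta|}$.

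Next I would invoke Theorem \ref{theomom}, which supplies the explicit formula
\[
m^{\beta}_{(n,r)} \;=\; \sum_{\alpha \in \mathbb{N}^n:\, \alpha \le \beta} r^{\underline{|\alpha|}}\, x^{\alpha}\, \prod_{i=1}^n S(\beta_i,\alpha_i).
\]
Substituting this into the displayed identity above yields the claimed formula for $B_r(\phi_\beta)(x)$ term-for-term, valid for every $x \in \Delta_n$ (the hypothesis $x \in \Delta_n$ is needed only to invoke the probabilistic interpretation of $\frac{r!}{\alpha!} x^\alpha$ as a probability mass function, which is exactly the setting under which Theorem \ref{theomom} is stated).

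There is essentially no obstacle here; the real work lies in Theorem \ref{theomom}, whose proof the authors defer to the Appendix. If one wanted to make the corollary fully self-contained, the only nontrivial step would be to verify the Stirling-number identity $y^d = \sum_{k=0}^d S(d,k)\, y^{\underline{k}}$ and apply it coordinate-wise inside the multinomial sum, then use the multinomial theorem to collapse the truncated inner sums. But taking Theorem \ref{theomom} as given, the corollary is a direct restatement.
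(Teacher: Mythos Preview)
Your proposal is correct and matches the paper's own derivation exactly: the paper obtains the corollary by substituting the moment formula of Theorem~\ref{theomom} into the identity $B_r(\phi_\beta)(x)=r^{-|\beta|}\,m^\beta_{(n,r)}$, and the self-contained appendix proof is precisely the proof of Theorem~\ref{theomom} itself.
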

\noindent For completeness, we will give a self-contained proof for Corollary \ref{thmbernstein} in the Appendix.

\subsection{Error bound analysis}

We show the following  error bound for the Bernstein approximation of order $r$ of an arbitrary polynomial  on the simplex.

\begin{theorem}\label{thmptas2}
For any polynomial $f\in\mathcal{H}_{n,d}$ and  $r\ge 1$,  one has
\begin{equation*}\label{reptas4}
\begin{array}{ll}
{
\max_{x\in \Delta_n}\{B_r(f)(x)-f(x)\}}
& \le {\left(1-{r^{\underline{d}}\over r^d}\right)}  \left(\displaystyle \max_{\beta\in I(n,d)} f_\beta {\beta!\over d!}  -\displaystyle\min_{\beta\in I(n,d)} f_\beta {\beta!\over d!}\right)\\
&\le \left(1-{r^{\underline{d}}\over r^d}\right){2d-1\choose d}d^d(\overline{f}-\underline{f}). 
\end{array}
\end{equation*}
\end{theorem}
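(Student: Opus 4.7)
The second inequality follows immediately from Theorem~\ref{thmptas1}, so I focus on the first, namely
$$\max_{x\in\Delta_n}\{B_r(f)(x)-f(x)\}\le\Big(1-\frac{r^{\underline d}}{r^d}\Big)\Big(\max_{\beta\in I(n,d)} c_\beta-\min_{\beta\in I(n,d)} c_\beta\Big),$$
where $c_\beta:=f_\beta\,\beta!/d!$ are the Bernstein coefficients of $f$. The plan is to represent both $f(x)$ and $B_r(f)(x)$ as expectations of the $c_\beta$'s under two probability distributions on $I(n,d)$ and then couple these distributions so that the factor $(1-r^{\underline d}/r^d)$ emerges naturally.

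By the multinomial theorem, $p_\beta(x):=(d!/\beta!)\,x^\beta$ is a probability distribution on $I(n,d)$ for every $x\in\Delta_n$, and expressing $f$ in the Bernstein basis gives $f(x)=\sum_\beta c_\beta\,p_\beta(x)$. Similarly, writing $X\sim\text{Multinomial}(r,x)$, we have $B_r(f)(x)=\mathbb{E}[f(X/r)]$; since $X/r\in\Delta_n$ almost surely, applying the Bernstein expansion inside the expectation yields $B_r(f)(x)=\sum_\beta c_\beta\,q_\beta(x)$, where $q_\beta(x):=(d!/\beta!)\,\mathbb{E}[(X/r)^\beta]=(d!/\beta!)\,B_r(\phi_\beta)(x)$ is another probability distribution on $I(n,d)$.

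The key step is to couple $p$ and $q$. Let $C_1,\ldots,C_r$ be i.i.d.\ color draws with law $x$ on $\{1,\ldots,n\}$, so that $X_i:=\#\{j:C_j=i\}$ gives $X\sim\text{Multinomial}(r,x)$. Independently, draw positions $Y_1,\ldots,Y_d$ uniformly at random from $\{1,\ldots,r\}$, and let $T\in I(n,d)$ be the composition of the colors $(C_{Y_1},\ldots,C_{Y_d})$. Conditional on $C$, the colors $C_{Y_k}$ are i.i.d.\ with law $X/r$, so marginally $\mathbb{P}(T=\beta)=\mathbb{E}[(d!/\beta!)(X/r)^\beta]=q_\beta(x)$. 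On the ``no-collision'' event $E=\{Y_1,\ldots,Y_d\ \text{distinct}\}$, which has probability $r^{\underline d}/r^d$, the selected colors are i.i.d.\ with law $x$ by exchangeability of the $C_j$'s, so $\mathbb{P}(T=\beta\mid E)=p_\beta(x)$. Setting $p'_\beta(x):=\mathbb{P}(T=\beta\mid E^c)$, this gives the mixture identity
$$q_\beta(x)=\frac{r^{\underline d}}{r^d}\,p_\beta(x)+\Big(1-\frac{r^{\underline d}}{r^d}\Big)\,p'_\beta(x).$$

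Combining these pieces yields $B_r(f)(x)-f(x)=\bigl(1-r^{\underline d}/r^d\bigr)\sum_\beta c_\beta(p'_\beta(x)-p_\beta(x))$, which is at most $(1-r^{\underline d}/r^d)(\max_\beta c_\beta-\min_\beta c_\beta)$ because $p$ and $p'$ are probability distributions on $I(n,d)$. Taking the maximum over $x\in\Delta_n$ gives the first inequality, and Theorem~\ref{thmptas1} yields the second. The main obstacle is identifying the right coupling; once the two-stage ``draw colors, then subsample $d$ positions with replacement'' construction is in place, the factor $r^{\underline d}/r^d$ appears automatically as the probability of the no-collision event, and the rest of the argument is a one-line estimate on expectations.
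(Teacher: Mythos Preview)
Your argument is correct and takes a genuinely different route from the paper's proof. The paper works algebraically: it uses the explicit expression of $B_r(\phi_\beta)$ in terms of Stirling numbers of the second kind (Corollary~\ref{thmbernstein}), then proves two auxiliary combinatorial identities (Lemma~\ref{lemptas1}, giving $\sum_{k=1}^{d-1} r^{\underline{k}} S(d,k)=r^d-r^{\underline d}$, and Lemma~\ref{lemptas2}, a summation identity for Stirling numbers) in order to show that a certain double sum $\sigma$ collapses to $r^d-r^{\underline d}$. Your coupling bypasses all of this machinery: by realising $q_\beta$ as the law of the colour composition of $d$ positions drawn \emph{with} replacement and $p_\beta$ as the law under the no-collision event, the factor $r^{\underline d}/r^d$ appears directly as the probability of distinctness, and the Stirling identities are never needed. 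This is very much in the spirit of the probabilistic viewpoint the paper discusses in Section~\ref{sec:conclusion} (Nesterov's random walk, where $B_r(f)(x)=E[f(X/r)]$), but the paper notes that Nesterov only handled the quadratic, square-free, and low-degree cases from first principles; your coupling supplies the missing general step cleanly.

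Two small remarks for completeness. First, when $d\le 1$ the event $E^c$ has probability zero and $p'_\beta$ is undefined; but then $1-r^{\underline d}/r^d=0$ and $B_r(f)=f$ on $\Delta_n$, so the bound is trivial. Second, your claim that $\mathbb{P}(T=\beta\mid E)=p_\beta(x)$ uses not just exchangeability of the $C_j$'s but their independence (so that any $d$ distinct coordinates are jointly i.i.d.\ with law $x$), together with the independence of $Y$ from $C$; both hold by construction, but it is worth making explicit.
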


\noindent For the proof we will need  two auxiliary results about the Stirling numbers of the second kind.
The first result  is implied  by \cite[relation (3.2)]{AK08}, and  we therefore only sketch the proof.

\begin{lemma}\label{lemptas1}
For positive integers $d$ and $r\ge 1$, one has
\begin{equation*}\label{reptas2}
\sum_{k=1}^{d-1}r^{\underline{k}}S(d,k)=r^d-r^{\underline{d}}.
\end{equation*}
\end{lemma}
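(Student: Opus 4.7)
My plan is to prove the identity by recognizing it as a rearrangement of the classical formula expressing ordinary powers in terms of falling factorials and Stirling numbers of the second kind, namely
\[
x^d = \sum_{k=0}^{d} S(d,k)\, x^{\underline{k}},
\]
valid as an identity of polynomials in $x$. Evaluating at $x=r$ and using $S(d,0)=0$ (for $d\ge 1$) together with $S(d,d)=1$, the right-hand side becomes $\sum_{k=1}^{d-1}r^{\underline{k}}S(d,k) + r^{\underline{d}}$, which is exactly what the lemma claims after transposing $r^{\underline{d}}$ to the left side.

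The only step that requires justification is therefore the classical identity itself, and I would give a brief combinatorial proof for self-containedness. The quantity $r^d$ counts the number of functions $\varphi:[d]\to [r]$. For each such function, let $k=|\varphi([d])|$ denote the size of its image, so $1\le k\le d$. A function with image of size exactly $k$ is specified by (i) a partition of $[d]$ into $k$ nonempty blocks (the level sets of $\varphi$), of which there are $S(d,k)$, and (ii) an injection from these $k$ blocks into $[r]$, of which there are $r^{\underline{k}}=r(r-1)\cdots(r-k+1)$. Summing over $k$ gives
\[
r^d = \sum_{k=1}^{d} S(d,k)\, r^{\underline{k}} = \sum_{k=1}^{d-1} S(d,k)\, r^{\underline{k}} + r^{\underline{d}},
\]
using $S(d,d)=1$ in the last step. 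Rearranging yields the lemma.

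There is no real obstacle here; the only care needed is to verify the boundary conventions $S(d,0)=0$ for $d\ge 1$ and $S(d,d)=1$ (both consistent with the definitions recalled before Theorem~\ref{theomom}), and to note that the identity holds vacuously in the edge case $d=1$ (where the left-hand sum is empty and the right-hand side is $r-r=0$). Alternatively, if one prefers an algebraic proof, one can verify the polynomial identity $x^d=\sum_{k=0}^d S(d,k)x^{\underline{k}}$ by induction on $d$, using the recurrence $S(d+1,k)=k\,S(d,k)+S(d,k-1)$ together with $x\cdot x^{\underline{k}} = x^{\underline{k+1}} + k\, x^{\underline{k}}$; this is entirely routine and I would only sketch it.
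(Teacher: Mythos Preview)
Your proof is correct. Your primary argument---recognizing the lemma as a rearrangement of the classical identity $r^d=\sum_{k=1}^d S(d,k)\,r^{\underline{k}}$ and proving the latter by the direct combinatorial count of functions $[d]\to[r]$ sorted by image size---differs from the paper's route: the paper proceeds by induction on $d$, using the Stirling recurrence $S(\beta+1,\alpha)=S(\beta,\alpha-1)+\alpha S(\beta,\alpha)$ (relation~(\ref{eq3})) for the induction step. Your combinatorial argument is arguably more transparent and yields the full identity in one stroke, whereas the inductive proof keeps everything internal to algebraic manipulations already set up in the paper. You in fact mention the inductive alternative yourself at the end, which is exactly the paper's proof; so both approaches are present in your proposal, with the roles of ``main'' and ``alternative'' reversed relative to the paper.
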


\begin{proof}
The proof is by induction on $d$, and using relation (\ref{eq3}) (in the appendix to this paper) for the induction step.\qed
\end{proof}
The second result gives an alternative expression for Stirling numbers of the second kind. We provide a full proof, since we could not find this result in
the literature.

\begin{lemma}\label{lemptas2}
Given $\alpha\in I(n,k)$ and $d>k$, one has
\begin{equation}\label{reptas3}
S(d,k)={\alpha!\over k!}\sum_{\beta\in I(n,d)}{d!\over\beta!}\prod_{i=1}^n S(\beta_i,\alpha_i).
\end{equation}
\end{lemma}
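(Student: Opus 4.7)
I would give a combinatorial proof based on double counting a natural family of ``colored set partitions.''

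\medskip

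\noindent\textbf{Plan.} Fix $\alpha\in I(n,k)$ with $k=|\alpha|$ and $d>k$. I define a colored set partition of $[d]$ of type $\alpha$ to be a pair $(\pi,c)$, where $\pi$ is a set partition of $[d]$ into $k$ (unordered) nonempty blocks and $c$ is a coloring of the blocks by elements of $[n]$ such that exactly $\alpha_i$ blocks receive color $i$, for each $i\in[n]$. The claim will follow by counting such objects in two ways.

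\medskip

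\noindent\textbf{First count (grouped by coloring).} First I would pick a set partition of $[d]$ into $k$ blocks in $S(d,k)$ ways. Since these $k$ blocks are mutually distinguishable (they are disjoint nonempty subsets of $[d]$), coloring them so that color $i$ is used exactly $\alpha_i$ times amounts to choosing a word of length $k$ over $[n]$ with prescribed letter frequencies, of which there are $k!/\alpha!$. This gives a total of $S(d,k)\cdot k!/\alpha!$ colored partitions.

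\medskip

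\noindent\textbf{Second count (grouped by color classes).} Alternatively, I would first choose, for each color $i\in[n]$, the set $C_i\subseteq[d]$ of elements that belong to some block of color $i$. Writing $\beta_i=|C_i|$, the vector $\beta$ lies in $I(n,d)$ and the number of choices of the ordered partition $(C_1,\dots,C_n)$ is the multinomial coefficient $d!/\beta!$. Once the $C_i$ are fixed, I must partition $C_i$ into $\alpha_i$ unordered nonempty blocks, which can be done in $S(\beta_i,\alpha_i)$ ways, independently across $i$. Summing over $\beta$ gives
$$\sum_{\beta\in I(n,d)} \frac{d!}{\beta!}\prod_{i=1}^n S(\beta_i,\alpha_i).$$

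Equating the two counts and solving for $S(d,k)$ yields \eqref{reptas3}.

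\medskip

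\noindent\textbf{Main obstacle.} The only delicate point is convincing the reader that the first count is correct, namely that once the unordered partition $\pi$ is fixed, the $k$ blocks can be treated as distinguishable for the purpose of coloring. This is true because the blocks of $\pi$ are pairwise disjoint nonempty subsets of $[d]$, hence are distinct as sets, so a coloring of them is literally a function from a $k$-element set to $[n]$ with prescribed fibre sizes. I would briefly emphasize this point to avoid the standard ``are the blocks labeled?'' confusion. As a sanity check, one can also verify the identity analytically from the exponential generating function $\sum_{d\ge k} S(d,k)\,x^d/d! = (e^x-1)^k/k!$ by writing $(e^x-1)^k/\alpha! = \prod_{i=1}^n (e^x-1)^{\alpha_i}/\alpha_i!$ and extracting the coefficient of $x^d/d!$; this independent derivation could be mentioned in a remark if desired.
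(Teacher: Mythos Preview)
Your proof is correct and follows essentially the same double-counting strategy as the paper: the paper counts surjections from $[d]$ onto $[k]$ (using a fixed partition of $[k]$ into parts of sizes $\alpha_1,\dots,\alpha_n$) in two ways, while you count colored set partitions of $[d]$ of type $\alpha$ in two ways, and these two families of objects are in $\alpha!$-to-$1$ correspondence, so the arguments differ only by a common factor of $\alpha!$ on both sides. The decomposition in your ``second count'' by the color classes $C_i$ with $|C_i|=\beta_i$ is exactly the paper's decomposition by the preimages $B_i=f^{-1}(A_i)$ with $|B_i|=\beta_i$.
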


\begin{proof}
{
For integers $d,k\ge 0$, let $S_{d,k}$ denote the number of surjective maps from a $d$-elements set to a $k$-elements set.
It is not difficult to see the following relation between $S_{d,k}$ and $S(d,k)$:
\begin{equation*}\label{eqsurj}
S_{d,k}= k! S(d,k).
\end{equation*}
Indeed, let $B=[d]$ and $A=[k]$. In order to choose a surjective map $f$ from $B$ to $A$ one needs to select the pre-image $B_i=f^{-1}(i)\subseteq B$ for each element $i\in [k]$. So to define a surjective map $f$, one first selects a partition of $B$ into $k$ non-empty subsets $B_1,\ldots,B_k$, which can be done in $S(d,k)$ ways. As any permutation of the $B_i's$ gives rise to a distinct surjective map,
 there are $k! S(d,k)$ surjective maps from $[d]$ to $[k]$.

Now, the identity (\ref{reptas3}) about the Stirling numbers $S(d,k)$ can be equivalently reformulated as the following identity about the numbers $S_{d,k}$: For any $\alpha \in I(n,k)$,
\begin{equation*}\label{idsurj}
S_{d,k}=\sum_{\beta \in I(n,d)} {d!\over \beta!} \prod_{i=1}^n S_{\beta_i,\alpha_i}.
\end{equation*}
Again set $B=[d]$ and $A=[k]$.
Say, $\alpha$ has $p$ non-zero coordinates, i.e., $\alpha_1,\ldots,\alpha_p\ge 1$ and $\alpha_1+\ldots+\alpha_p=k$.
Fix a partition of $A=[k]$ into $p$ subsets $A_1,\ldots,A_p$ where $|A_i|=\alpha_i$ for $i\in [p]$.
Then, a surjection $f$ from $B$ to $A$ defines a surjection from $B_i=f^{-1}(A_i)$ to $A_i$ for each $i\in [p]$.
Setting $\beta_i=|B_i|$, we have  $\beta_1+\ldots +\beta_p= d$ since the $B_i's$ partition $B$.
Hence  one can count the number of surjections from $B$ to $A$ as follows.

First, select $\beta_1,\ldots,\beta_p\ge 1$ such that  $\beta_1+\ldots+\beta_p=d$.
Then split the $d$ elements of $B$ into an ordered sequence of $p$ disjoint subsets $B_1,\ldots,B_p$ where $|B_i|=\beta_i$ for $i\in [p]$; there are ${d!\over \beta!}$ ways of doing so.
Once $B_1,\ldots,B_p$ are selected,  there are $S_{\beta_i,\alpha_i}$ possible surjections from $B_i$ to $A_i$ for each $i\in [p]$ and thus a total of
$\prod_{i=1}^p S_{\beta_i,\alpha_i}$ possibilities. Therefore, we get that the total number of surjections from $B$ to $A$ is equal to
$\sum_{\beta\in I(p,d)} {d!\over \beta!} \prod_{i=1}^p S_{\beta_i,\alpha_i}$, which shows the result.
}
\qed
\end{proof}

\noindent We are now ready to prove Theorem \ref{thmptas2}.

\begin{proof} {\em (of Theorem \ref{thmptas2})}
Consider a polynomial $f=\sum_{\beta\in I(n,d)}f_{\beta}x^{\beta}\in\mathcal{H}_{n,d}$ and $x\in\Delta_n$. Applying  Corollary~\ref{thmbernstein}, we can write  the Bernstein approximation of $f$ at $x\in \Delta_n$ as follows:
\begin{equation*}\label{re7}
B_r(f)(x)={1\over r^{d}}\sum_{\beta\in I(n,d)}f_{\beta}\sum_{\alpha:0\le\alpha\le\beta}r^{\underline{|\alpha|}}x^{\alpha}\prod_{i=1}^nS(\beta_i,\alpha_i).
\end{equation*}
Therefore,
\begin{eqnarray*}
r^{d}B_r(f)(x)=r^{\underline{d}}f(x)+\sum_{\beta\in I(n,d)}f_{\beta}\sum_{\alpha:0\le\alpha\le\beta,\alpha\ne\beta}r^{\underline{|\alpha|}}x^{\alpha}\prod_{i=1}^nS(\beta_i,\alpha_i),\label{re8}
\end{eqnarray*}
and thus
\begin{equation*}\label{reptas1}
r^{d}(B_r(f)(x)-f(x))=-(r^d-r^{\underline{d}})f(x)+\sum_{\beta\in I(n,d)}f_{\beta}\sum_{\alpha: 0\le\alpha\le\beta,\alpha\ne\beta}r^{\underline{|\alpha|}}x^{\alpha}\prod_{i=1}^nS(\beta_i,\alpha_i).
\end{equation*}
Using  (\ref{reprop1}), we have $f(x)\ge \min_{\beta\in I(n,d)} f_\beta {\beta!\over d!}$ and {$f_\beta{\beta!\over d!}\le \max_{\beta'\in I(n,d)} f_{\beta'} {\beta'!\over d!}$}, which permits to derive the following inequality:
\begin{equation}\label{eq00}
\begin{array}{l}
 r^{d}(B_r(f)(x)-f(x))  \le \\
  -(r^d-r^{\underline{d}})\displaystyle \min_{\beta\in I(n,d)}f_{\beta}{\beta!\over d!} +\max_{\beta\in I(n,d)}f_{\beta}{\beta!\over d!}
 \left(\underbrace{\sum_{\beta\in I(n,d)}{d!\over\beta!}\sum_{\alpha:0\le\alpha\le\beta,\alpha\ne\beta}
 x^\alpha r^{\underline{|\alpha|}}\prod_{i=1}^nS(\beta_i,\alpha_i) }_{\sigma}\right).
\end{array}
 \end{equation}
 It now suffices to upper bound the right handside of the inequality (\ref{eq00})
and to show that the inner  summation $\sigma$ is equal to $r^d-r^{\underline d}$. Indeed,
 \begin{eqnarray*}
 \sigma & = & \sum_{\beta\in I(n,d)}{d!\over\beta!}\sum_{\alpha:0\le\alpha\le\beta,\alpha\ne\beta}
 x^\alpha r^{\underline{|\alpha|}}\prod_{i=1}^nS(\beta_i,\alpha_i) \\
&  = &  \sum_{\alpha\in \oN^n} x^\alpha r^{\underline{|\alpha|}}  \sum_{\beta\in I(n,d): \beta \ge \alpha, \beta \ne \alpha} {d!\over \beta!}\prod_{i=1}^nS(\beta_i,\alpha_i) \\
 & = & \sum_{k=1}^{d-1} \sum_{\alpha\in I(n,k)} x^\alpha r^{\underline{|\alpha|}}
 \left(\sum_{\beta\in I(n,d): \beta \ge \alpha} {d!\over \beta!}\prod_{i=1}^nS(\beta_i,\alpha_i) \right)\\
 & = & \sum_{k=1}^{d-1} r^{\underline{k}} \sum_{\alpha\in I(n,k)} x^\alpha \left({k!\over \alpha!} S(d,k)\right) \hspace*{6cm}  \text{[using Lemma \ref{lemptas2}]}\\
 &=& \sum_{k=1}^{d-1} r^{\underline{k}} S(d,k) (\sum_i x_i)^k
 = \sum_{k=1}^{d-1} r^{\underline{k}} S(d,k) = r^d-r^{\underline{d}}. \hspace*{3.3cm} \text{[using Lemma \ref{lemptas1}]}\\
     \end{eqnarray*}
 Using this identity for the summation $\sigma$  in the inequality (\ref{eq00}) we obtain
 $$r^d\left(\min_{x\in \Delta_n} B_r(f)(x) -\underline{f}\right) \le r^{d}\max_{x\in \Delta_n}\{B_r(f)(x)-f(x)\} \le (r^d-r^{\underline{d}})  \left( \max_{\beta\in I(n,d)} f_\beta {\beta!\over d!}  -\min_{\beta\in I(n,d)} f_\beta {\beta!\over d!}\right).$$
By combining with  Theorem \ref{thmptas1} we obtain the claimed inequalities of Theorem \ref{thmptas2} and this concludes the proof.
\qed
\end{proof}

\noindent Combining Theorem \ref{thmptas2} with Lemma  \ref{claim1}, we obtain the following error bound, which was first shown {in} \cite[Theorem 1.3]{KLP06}.

\begin{corollary}\label{corptas2}
For any polynomial $f\in \mathcal{H}_{n,d}$ and $r\ge 1$, one has
\begin{equation*}
\label{final bounds}
f_{\Delta(n,r)}-\underline{f} \le 
  \left(1-{r^{\underline{d}}\over r^d}\right){2d-1\choose d}d^d(\overline{f}-\underline{f}).
\end{equation*}
\end{corollary}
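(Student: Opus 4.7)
The plan is to derive Corollary \ref{corptas2} as an immediate consequence of two previously established results, so the work is almost entirely bookkeeping. First I would invoke the right-most inequality of Lemma \ref{claim1}, which gives
\[
f_{\Delta(n,r)} - \underline{f} \;\le\; \max_{x\in\Delta_n}\{B_r(f)(x) - f(x)\}.
\]
This reduces the task from controlling the grid-minimum gap to controlling the pointwise defect of the Bernstein approximation, which is precisely what Theorem \ref{thmptas2} estimates.

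Next I would apply Theorem \ref{thmptas2} to the right-hand side, yielding
\[
\max_{x\in\Delta_n}\{B_r(f)(x) - f(x)\} \;\le\; \left(1 - \frac{r^{\underline{d}}}{r^d}\right)\binom{2d-1}{d}d^d\,(\overline{f}-\underline{f}).
\]
Chaining the two inequalities delivers the claimed bound. Note that the factor $\binom{2d-1}{d}d^d$ already incorporates the passage from the range of Bernstein coefficients to the range of function values via Theorem \ref{thmptas1}, so no further work is needed at this step.

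There is no substantive obstacle: everything technical has been dispatched in Theorems \ref{thmptas1} and \ref{thmptas2}, whose proofs used Corollary \ref{thmbernstein} for the Bernstein expansion of a monomial and Lemmas \ref{lemptas1}--\ref{lemptas2} to collapse the Stirling-number sums into the clean factor $1 - r^{\underline{d}}/r^d$. The only thing worth flagging in the write-up is that the inequality $r \ge 1$ suffices: when $r < d$, $r^{\underline{d}} = 0$ so the factor $1 - r^{\underline{d}}/r^d = 1$ and the bound becomes $\binom{2d-1}{d}d^d(\overline{f}-\underline{f})$, which trivially holds by Theorem \ref{thmptas1} together with (\ref{reprop1}). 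Thus the corollary follows in a single line of chaining, and it recovers verbatim the PTAS result of \cite[Theorem 1.3]{KLP06}.
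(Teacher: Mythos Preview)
Your proposal is correct and matches the paper's approach exactly: the paper states the corollary as the direct combination of Theorem \ref{thmptas2} with Lemma \ref{claim1}, which is precisely the two-step chain you spell out. Your side remark on the case $r<d$ is unnecessary (Theorem \ref{thmptas2} is already stated for all $r\ge 1$), but it is harmless and correct.
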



\begin{example}\label{exdegd}
We consider here the problem of minimizing the polynomial $f=\sum_{i=1}^n x_i^d $ ($n\ge 2$) over the simplex for any degree $d\ge 2$, thus extending the case $d=2$ considered in Example \ref{exquad} and the case $d=3,n=2$ considered in Example \ref{excubic}.
As $f$ is convex on $\oR^n_+$ it follows that $\overline{f}=1$ and $\underline{f}={1\over n^{d-1}}$.

We now compute the minimum over the regular grid $\Delta(n,r)$. As in Example \ref{exquad} set $r=kn+s$ where $k,s\in \oN$ with $s\le n-1$.
We show that $f_{\Delta(n,r)}$ is attained at any point $x$ having $s$ components equal to $k+1\over r$ and $n-s$ components equal to $k\over r$, so that
$$f_{\Delta(n,r)}= s\left({k+1\over r}\right)^d + (n-s)\left({k\over r}\right)^d.$$
For this pick a minimizer $x$ of $f$ over $\Delta(n,r)$ and it suffices to show that $x_i-x_j\le {1\over r}$ for all $i,j\in [n]$.
If (say) $x_2-x_1>{1\over r}$ then we claim that
$f(x_1+{1\over r},x_2-{1\over r},x_3,\ldots,x_n)<f(x_1,x_2,x_3,\ldots,x_n)$, which contradicts the minimality assumption on $x$.
One can see the above claim as follows: set $\sigma=1-\sum_{i=3}^nx_i$, consider the function $\phi(t)=t^d+(\sigma-t)^d$ for $t$ satisfying $0\le t< {1\over 2}(\sigma -{1\over r})$, and verify (using elementary calculus) that $\phi(t+{1\over r})<\phi(t)$ for any such $t$.
Therefore, we have
\begin{eqnarray*}
f_{\Delta(n,r)}-\underline{f} &=& (n-s)\left({k\over r}\right)^d +s \left({k+1\over r}\right)^d- {n\over n^d}\\
&=& (n-s) \left(\left({k\over r}\right)^d -{1\over n^d}\right) + s \left( \left({k+1\over r}\right)^d -{1\over n^d}\right)\\
&=& {n-s\over n^d} \left( \left( 1-{s\over r}\right)^d-1\right) + {s\over n^d} \left( \left( 1-{s-n\over r}\right)^d -1\right)\\
&=& {s(n-s)\over n^d} \sum_{i=2}^d {d\choose i} {(n-s)^{i-1}+(-1)^i s^{i-1}\over r^i}.\\
\end{eqnarray*}
Using the fact that $s,n-s\le n$,  for any $r\ge n$ we can bound the above summation as follows:
$$ \sum_{i=2}^d {d\choose i} {(n-s)^{i-1}+(-1)^i s^{i-1}\over r^i}\le  {2\over n}\sum_{i=2}^d {d\choose i} \left({n\over r}\right)^i \le
{2\over n} \left({n\over r}\right)^2 \sum_{i=2}^d {d\choose i} \le
{2\over n} \left({n\over r}\right)^2 2^d=
  {2^{d+1}n \over r^2}.$$
Combining with the bound $s(n-s)\le {n^2\over 4}$, we deduce that
$$f_{\Delta(n,r)}-\underline{f} \le {n^2\over 4} {1\over n^d}{2^{d+1} n\over r^2} = {2^{d-1} \over n^{d-3} r^2}. $$
Therefore,
\begin{equation}\label{eqdegd}
{f_{\Delta(n,r)}-\underline{f} \over \overline{f} -\underline{f}} \le
{2^{d-1} \over n^{d-3} r^2}{n^{d-1}\over n^{d-1}-1}
= {2^{d-1} \over r^2} {n^2\over n^{d-1}-1} \le {2^d \over r^2} \ \ \ \text{ for any } r\ge n\ge 2 \text{ and } d\ge 3.
\end{equation}
Hence we see that for any degree $d\ge 3$ the ratio is in the order $1\over r^2$. Recall that for degree $d=2$ it was observed in Example \ref{exquad} that  it can be in the order $1\over r$ for certain values of $r$ (e.g., for $r={3n\over 2}$).
\end{example}

\section{Concluding remarks}
\label{sec:conclusion}
Nesterov \cite{Nes2003} proposed an alternative probabilistic argument
for  estimating the quality of the bounds
$f_{\Delta(n,r)}$.
He
considered a random walk on the simplex $\Delta_n$, which generates
a sequence of random points
$x^{(r)}\in \Delta(n,r)$ ($r\ge 1$).
Thus the expected value $E(f(x^{(r)}))$ of the evaluation of a polynomial $f \in \mathcal{H}_{n,d}$
at $x^{(r)}$ satisfies:
\begin{equation*}\label{eqNes}
f_{\Delta(n,r)}\le E(f(x^{(r)})).
\end{equation*}
Nesterov's approach goes as follows. Let $x\in \Delta_n$
and let $\zeta$ be a discrete random variable taking values in $\{1,\ldots,n\}$
distributed according to the {multinomial} distribution with $n$ categories and where the  probability of the $i$-th category is given by $x_i$. That is,
\begin{equation}\label{dist}
\mbox{Prob}(\zeta=i)=x_i \ (i=1,\ldots,n).
\end{equation}
Consider the random process:
$$y^{(0)}=0\in \mathbb{R}^n,\ y^{(r)}=y^{(r-1)}+e_{\zeta_r}\ (r\ge 1)$$
where $\zeta_r$ are independent random variables distributed according
to (\ref{dist}).
In other words, $y^{(r)}$ equals $y^{(r-1)}+e_i$ with probability $x_i$.
Finally, define
$$x^{(r)}={1\over r}y^{(r)} \in \Delta(n,r) \quad \ (r\ge 1).$$
\noindent
For a given $\alpha \in I(n,r)$, the probability of the event $y^{(r)} = \alpha$ is given by
\[
\mbox{Prob}(y^{(r)}=\alpha) = {r!\over \alpha!}x^\alpha,
\]
by the properties of the multinomial distribution.
Thus one also has
$
\mbox{Prob}(x^{(r)}=\alpha/r) = {r!\over \alpha!}x^\alpha,
$
and it immediately follows that
\[
E(f(x^{(r)}))  =
\sum_{\alpha\in I(n,r)} \mbox{Prob}(x^{(r)}=\alpha/r) f(\alpha/r)
= \sum_{\alpha\in I(n,r)} {r!\over \alpha!}x^\alpha f\left({\alpha\over r}\right)
=
 B_r(f)(x).
\]
In this sense, the approach of our paper using Bernstein approximation is equivalent to the analysis of Nesterov \cite{Nes2003} (although the
equivalence is not obvious a priori).
On the other hand, in \cite{Nes2003} the link with Bernstein approximation is not made, and the author
calculated the values $E(f(x^{(r)}))$ from first principles for polynomials up to degree four and for square-free polynomials. Based on this Nesterov \cite{Nes2003} gave the error bounds from Theorems \ref{berineq} and \ref{thmsf} for the quadratic and square-free cases. However he did not consider the general case.
Thus the analysis in this paper
  completes the analysis in \cite{Nes2003} by placing it in the well-studied framework of Bernstein approximation and clarifying the link to the multinomial distribution.

\newcommand{\MM}{\mathcal{M}}

\medskip
We conclude with a general comment regarding a further interpretation of  the upper bound $B_r(f)(x)$ (where $x\in \Delta_n$)  for the minimum $f_{\Delta(n,r)}$ over the regular grid, within the general framework introduced by Lasserre \cite{Las01} based on reformulating polynomial optimization problems as optimization problems over measures.
A basic, fundamental idea of Lasserre \cite{Las01} to compute the minimum of a polynomial $f$ over a compact set $K\subseteq \oR^n$ is to reformulate the problem as a minimization problem over the set $\MM(K)$ of (Borel) probability measures on the set $K$. (We assume $K$ compact for simplicity but Lasserre's idea works for $K$ closed). Namely,
$$\min_{x\in K} f(x)=\min _{\mu\in \MM(K)} E_\mu(f),$$
setting $E_\mu(f)=\int_K f(x)\mu(dx).$ The above identity is simple. As $f(x)\ge \min_{x\in K}f(x)$ for all $x\in K$, one can integrate both sides with respect to any measure $\mu\in \MM(K)$, which gives the inequality $\min_{x\in K} f(x){\le} \min _{\mu\in \MM(K)} E_\mu(f).$ For the converse inequality, let $\mu$ be the Dirac measure at a global minimizer $x$ of $f$ over $K$, so that $E_\mu(f)=\min_{x\in K}f(x)$.

Applying this idea to polynomial minimization over the regular grid $\Delta(n,r)$, one has
$$f_{\Delta(n,r)}=\min_{\mu\in \MM(\Delta(n,r))} E_\mu(f).$$
Thus in order to upper bound $f_{\Delta(n,r)}$ it suffices to choose a suitable probability measure on the regular grid $\Delta(n,r)$ and, according to our discussion above,  this is precisely what the bound $B_r(f)(x)$ boils down to.

Indeed, by considering the multinomial distribution with $n$ categories and with probability $x_i$ for the $i$-th category, after $r$ independent trials we get a probability distribution over $I(n,r)$ where $\alpha\in I(n,r)$ is picked with probability ${r!\over \alpha!}x^\alpha$. This in turn gives a probability distribution $\mu_r$ on $\Delta(n,r)={1\over r} I(n,r)$ where $\alpha\over r$ is picked with the same probablity ${r!\over \alpha!}x^\alpha$.
Now, as was shown above,  $E_{\mu_r}(f) = B_r(f)(x)$ is thus an upper bound on $f_{\Delta(n,r)}$.

\medskip

A final comment concerns the asymptotic convergence rate of the sequence
\[
\rho_r(f) = \frac{f_{\Delta(n,r)}-\underline{f}}{\overline{f} - \underline{f}} \quad r = 1,2,\ldots
\]
for a given polynomial $f\in \mathcal H_{n,d}$.
In all the examples presented in this paper, one has $\rho_r(f) = O(1/r^2)$; see Examples \ref{exquad}, \ref{excubic}, \ref{exsf} and \ref{exdegd}.
It remains an open problem to determine the asymptotic convergence rate in general.

\begin{acknowledgements}
{The first author thanks Immanuel Bomze for providing the reference \cite{JKB97},
and Dima Pasechnik for valuable comments on the approach by Nesterov \cite{Nes2003}.}
We also thank two anonymous reviewers for their comments that helped us improve the presentation of the paper.
\end{acknowledgements}


\appendix
\section{Proof of Theorem \ref{theomom}}
\noindent In this Appendix, we give a self-contained proof for Theorem \ref{theomom}, which provides an explicit description  of the moments of the multinomial distribution in terms of the Stirling numbers of the second kind
(and as a direct application the explicit formulation for the Bernstein approximation on the simplex from   Corollary \ref{thmbernstein}).

Given $x\in \Delta_n$, we consider the multinomial distribution with $n$ categories and $r$ independent trials, where the probability of drawing the $i$-th category is given by $x_i$. Hence, given $\alpha\in I(n,r)$, the probability of drawing $\alpha_i$ times the $i$-th category for each $i\in [n]$ is equal to ${r!\over \alpha!}x^\alpha$ and, for any $\beta\in \oN^n$, the $\beta$-th moment of the multinomial distribution is
given by
\begin{equation}\label{eqmom0}
m^\beta_{(n,r)}:=\sum_{\alpha \in I(n,r)} \alpha^\beta {r!\over \alpha!} x^\alpha.
\end{equation}
Our objective is to show the following reformulaton of the $\beta$-th moment in terms of the Stirling numbers of the second kind:
\begin{equation}\label{eqmom1}
 m^\beta_{(n,r)} = \sum_{\alpha\in \oN^n: \alpha\le\beta}r^{\underline{|\alpha|}}x^{\alpha}\prod_{i=1}^nS(\beta_i,\alpha_i).
 \end{equation}

\noindent
Our proof is elementary in the sense that we will obtain  the moments of the multinomial distribution using its moment generating function.
One of the ingredients which we will use is the fact that the identity (\ref{eqmom1}) holds for the case $n=2$ of the binomial distribution when $\beta\in \oN^2$ is of the form $\beta=(\beta_1,0)$. Namely, the following identity is shown in \cite{AK08} (see Theorem 2.2 and relation (3.1) there).

\begin{lemma}\label{lemAK}\cite{AK08}
Given  $\beta_1\in \oN$ and  $x_1\in \oR$ such that $0\le x_1\le 1$, one has
$$m^{(\beta_1,0)}_{(2,r)}= \sum _{\alpha_1=0}^{r} \alpha_1^{\beta_1} {r\choose \alpha_1} x_1^{\alpha_1}(1-x_1)^{r-\alpha_1}
= \sum_{\alpha_1=0}^{\beta_1} r^{\underline{\alpha_1}} x_1^{\alpha_1} S(\beta_1,\alpha_1).$$
\end{lemma}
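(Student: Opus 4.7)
The plan is to rewrite the ordinary power $\alpha_1^{\beta_1}$ appearing inside the binomial moment as a linear combination of falling factorials via the standard Stirling-number identity
\[
k^{\beta_1} \;=\; \sum_{\alpha_1=0}^{\beta_1} S(\beta_1,\alpha_1)\, k^{\underline{\alpha_1}} \qquad (k\in\oN),
\]
which is in fact one of the classical defining relations for $S(\beta_1,\alpha_1)$ (and can be proved by induction on $\beta_1$ using the recurrence $S(\beta_1{+}1,\alpha_1)=\alpha_1 S(\beta_1,\alpha_1)+S(\beta_1,\alpha_1{-}1)$). Substituting this into the defining sum for $m^{(\beta_1,0)}_{(2,r)}$ and exchanging the order of summation gives
\[
m^{(\beta_1,0)}_{(2,r)} \;=\; \sum_{\alpha_1=0}^{\beta_1} S(\beta_1,\alpha_1) \sum_{k=0}^{r} k^{\underline{\alpha_1}}\binom{r}{k} x_1^{k}(1-x_1)^{r-k}.
\]

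Next I would apply the elementary identity
\[
k^{\underline{\alpha_1}}\binom{r}{k} \;=\; r^{\underline{\alpha_1}}\binom{r-\alpha_1}{k-\alpha_1},
\]
valid for $0\le \alpha_1 \le k \le r$ (and with the left side vanishing when $k<\alpha_1$, so one may restrict the inner sum to $k\ge \alpha_1$). This follows immediately by expanding both sides into factorials. Factoring out $r^{\underline{\alpha_1}}x_1^{\alpha_1}$ and setting $j=k-\alpha_1$, the inner sum becomes
\[
\sum_{j=0}^{r-\alpha_1}\binom{r-\alpha_1}{j} x_1^{j}(1-x_1)^{r-\alpha_1-j} \;=\; \bigl(x_1+(1-x_1)\bigr)^{r-\alpha_1} \;=\; 1
\]
by the binomial theorem. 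Collecting everything yields the claimed closed form
\[
m^{(\beta_1,0)}_{(2,r)} \;=\; \sum_{\alpha_1=0}^{\beta_1} r^{\underline{\alpha_1}} x_1^{\alpha_1} S(\beta_1,\alpha_1).
\]

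There is really no substantive obstacle here: the whole argument is a two-line combinatorial manipulation once the Stirling expansion of $k^{\beta_1}$ is in place. The only point that deserves care is the boundary handling in the falling-factorial/binomial identity (the fact that $k^{\underline{\alpha_1}}=0$ for $k<\alpha_1$ automatically aligns the summation ranges), which ensures that after the switch of summations the inner sum is a full binomial expansion and hence collapses to $1$.
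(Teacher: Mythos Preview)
Your argument is correct. The Stirling expansion $k^{\beta_1}=\sum_{\alpha_1} S(\beta_1,\alpha_1)k^{\underline{\alpha_1}}$, the factorial identity $k^{\underline{\alpha_1}}\binom{r}{k}=r^{\underline{\alpha_1}}\binom{r-\alpha_1}{k-\alpha_1}$, and the collapse of the inner sum via the binomial theorem are all valid, and your remark about $k^{\underline{\alpha_1}}=0$ for $k<\alpha_1$ correctly handles the range alignment (note also that $r^{\underline{\alpha_1}}=0$ when $\alpha_1>r$, so those terms cause no trouble either).

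As for comparison: the paper does not actually prove this lemma. It is quoted as a known result from \cite{AK08} (Theorem~2.2 and relation~(3.1) there) and used as the base case $n=2$, $\beta=(\beta_1,0)$ in the appendix proof of Theorem~\ref{theomom}. Your write-up therefore supplies a short self-contained proof where the paper simply invokes the reference; the mechanism you use---expanding powers into falling factorials via Stirling numbers and then recognising a binomial expectation of a falling factorial as $r^{\underline{\alpha_1}}x_1^{\alpha_1}$---is in fact the same idea underlying Knoblauch's derivation, so nothing is lost or gained beyond making the argument explicit here.
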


\noindent
This implies that the identity (\ref{eqmom1}) holds for  the moments of the multinomial distribution when the order $\beta$ has a single non-zero coordinate, i.e., $\beta $ is of the form $\beta=\beta_ie_i$ with $\beta_i\in\oN$.

\begin{corollary}
\label{lemS-Q}
Given $\beta_i\in \oN$ and $x\in \Delta_n$, one has
$$m^{\beta_ie_i}_{(n,r)} = \sum_{\alpha_i=0}^{\beta_i} r^{\underline{\alpha_i}} x_i^{\alpha_i} S(\beta_i,\alpha_i).$$
\end{corollary}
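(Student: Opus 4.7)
The plan is to reduce the multinomial case to the binomial case provided by Lemma \ref{lemAK}, by grouping together all categories $j \ne i$. The key observation is that the moment $m^{\beta_i e_i}_{(n,r)}$ depends only on the marginal distribution of the $i$-th coordinate of the random multinomial vector, which is itself binomially distributed with parameters $r$ and $x_i$.

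First, I would write out the definition
\[
m^{\beta_i e_i}_{(n,r)} = \sum_{\alpha\in I(n,r)} \alpha_i^{\beta_i} \frac{r!}{\alpha!} x^\alpha
\]
and then split the summation by fixing the value $\alpha_i = k$ for $k \in \{0,1,\dots,r\}$. For fixed $k$, the remaining indices $(\alpha_j)_{j\ne i}$ range over nonnegative integers summing to $r-k$. Factoring out $\binom{r}{k} x_i^k$, the remaining inner sum
\[
\sum_{\substack{\alpha_j \ge 0,\, j\ne i \\ \sum_{j\ne i}\alpha_j = r-k}} \frac{(r-k)!}{\prod_{j\ne i}\alpha_j!} \prod_{j\ne i} x_j^{\alpha_j}
\]
equals $\bigl(\sum_{j\ne i} x_j\bigr)^{r-k} = (1-x_i)^{r-k}$ by the multinomial theorem, using $x \in \Delta_n$.

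Combining these steps yields
\[
m^{\beta_i e_i}_{(n,r)} = \sum_{k=0}^{r} k^{\beta_i} \binom{r}{k} x_i^k (1-x_i)^{r-k},
\]
which is precisely the expression for $m^{(\beta_i,0)}_{(2,r)}$ appearing in Lemma \ref{lemAK}, with the role of $x_1$ played by $x_i \in [0,1]$. Applying Lemma \ref{lemAK} immediately gives the claimed identity.

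There is no real obstacle here: the argument is essentially a one-line reduction once the marginalization step is carried out. The only thing to be careful about is confirming that the multinomial theorem applies to the inner sum (which requires only that the remaining $\alpha_j$ be free nonnegative integers summing to $r-k$, as is the case) and that the hypothesis $0 \le x_i \le 1$ of Lemma \ref{lemAK} is satisfied, which follows since $x \in \Delta_n$.
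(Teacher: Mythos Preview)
Your proposal is correct and follows essentially the same approach as the paper: both write out the defining sum, fix the value of $\alpha_i$, collapse the remaining coordinates via the multinomial theorem to obtain the binomial moment expression, and then invoke Lemma~\ref{lemAK}.
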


\begin{proof}
By (\ref{eqmom0}), we have
\begin{eqnarray*}
m^{(\beta_ie_i)}_{(n,r)} & =&\sum_{\alpha\in I(n,r)} \alpha_i^{\beta_i} {r!\over \alpha!} x^\alpha
= \sum_{\alpha_i=0}^r {\alpha_i}^{\beta_i}{r!\over \alpha_i!(r-\alpha_i)!}x_i^{\alpha_i}\left(\sum_{\overline{\alpha}\in I(n-1,r-\alpha_i)}{(r-\alpha_i)!\over {\overline{\alpha}}!}x^{\overline{\alpha}}\right)\\
&=& \sum_{\alpha_i=0}^r{\alpha_i}^{\beta_i}{r\choose \alpha_i} x_i^{\alpha_i}\left(\sum_{j\ne i} x_j\right)^{r-\alpha_i}
= \sum_{\alpha_i=0}^r{\alpha_i}^{\beta_i}{r \choose \alpha_i}x_i^{\alpha_i}(1-x_i)^{r-\alpha_i},\\
\end{eqnarray*}
which is equal to $ \sum_{\alpha_i=0}^{\beta_i} r^{\underline{\alpha_i}} x_i^{\alpha_i} S(\beta_i,\alpha_i)$ by  Lemma \ref{lemAK}.
 \qed \end{proof}

\noindent
In order to determine the moments of the multinomial distribution we use its moment generating function
$$t\in \oR^n \mapsto M_{x}^{r}(t):=\left(\sum_{i=1}^nx_ie^{t_i}\right)^r.$$
Then, for $\beta\in\oN^n$, the $\beta$-th moment of the multinomial distribution is equal to the $\beta$-th derivative of the moment {generating} function evaluated at $t=0$. Namely,
\begin{equation}\label{momentmgf}
m^{\beta}_{(n,r)}=\left.{\frac{\partial^{|\beta|}M_{x}^{r}(t)}{\partial t_1^{\beta_1}\cdots \partial t_{n}^{\beta_n} }}\right|_{t=0}.
\end{equation}
By Corollary  \ref{lemS-Q} we know that, for any $\beta_i\in \oN$,
\begin{equation}\label{re9}
m^{(\beta_ie_i)}_{(n,r)}=\left.{\frac{\partial^{\beta_i}M_{x}^{r}(t)}{\partial t_i^{\beta_i}}}\right|_{t=0}=\sum_{\alpha_i=0}^{\beta_i}S(\beta_i,\alpha_i)r^{\underline{\alpha_i}}x_i^{\alpha_i}.
\end{equation}
Next we show an analogue of the above relation (\ref{re9}) for  the evaluation of the $\beta_ie_i$-th derivative of the moment generating function at any point $t\in \oR^n$.
\begin{lemma}\label{lemsingle}
For  $x\in\Delta_n$, $\beta_i\in \oN$ and $t\in \oR^n$, one has
\begin{equation*}
{\frac{\partial^{\beta_i}M_{x}^{r}(t)}{\partial t_i^{\beta_i}}}
=\sum_{\alpha_i=0}^{\beta_i}S(\beta_i,\alpha_i)r^{\underline{\alpha_i}}x_i^{\alpha_i}e^{\alpha_it_i}M_{x}^{r-\alpha_i}(t).
\end{equation*}
\end{lemma}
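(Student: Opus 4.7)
The plan is to prove the identity by induction on $\beta_i$, relying on only three ingredients: the product rule, the elementary derivative formula $\partial M_x^s(t)/\partial t_i = s\, x_i e^{t_i} M_x^{s-1}(t)$ (which follows immediately from $M_x^s(t) = (\sum_j x_j e^{t_j})^s$ and the chain rule, and is valid for any integer $s \ge 0$, with $M_x^0 \equiv 1$), and the Stirling recurrence $S(m+1, k) = k S(m, k) + S(m, k-1)$. The base case $\beta_i = 0$ is immediate: the right-hand side collapses to its single $\alpha_i = 0$ term, giving $S(0,0) r^{\underline{0}} x_i^0 e^0 M_x^r(t) = M_x^r(t)$.

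For the inductive step, I assume the formula at $\beta_i$ and differentiate both sides once more with respect to $t_i$. On each summand, the product rule produces two contributions: from $\partial(e^{\alpha_i t_i})/\partial t_i$ one picks up a factor $\alpha_i$ while leaving the rest unchanged, and from $\partial M_x^{r-\alpha_i}(t)/\partial t_i$ one picks up the factor $(r-\alpha_i) x_i e^{t_i}$ and lowers the exponent of $M_x$ by one. In the second type of contribution, the factor $(r - \alpha_i)$ combines with $r^{\underline{\alpha_i}}$ to give $r^{\underline{\alpha_i + 1}}$, while $x_i \cdot x_i^{\alpha_i}$ and $e^{t_i} \cdot e^{\alpha_i t_i}$ shift the monomial/exponential index by one.

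After reindexing the second sum via $\alpha_i' = \alpha_i + 1$ and merging both sums over the common range $0 \le \alpha_i' \le \beta_i + 1$ (using the boundary conventions $S(\beta_i, -1) = 0 = S(\beta_i, \beta_i + 1)$), the coefficient of $r^{\underline{\alpha_i'}} x_i^{\alpha_i'} e^{\alpha_i' t_i} M_x^{r - \alpha_i'}(t)$ becomes $\alpha_i' S(\beta_i, \alpha_i') + S(\beta_i, \alpha_i' - 1)$, which by the Stirling recurrence is exactly $S(\beta_i + 1, \alpha_i')$. This yields the identity at level $\beta_i + 1$, closing the induction. The only real obstacle is bookkeeping: one has to line up the shifted and unshifted sums and handle the endpoints $\alpha_i' = 0$ and $\alpha_i' = \beta_i + 1$ via the zero conventions so that the Stirling recurrence applies uniformly across the merged range. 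Conceptually there is nothing deeper: the identity is essentially the operator formula $(y \partial_y)^{\beta_i} = \sum_k S(\beta_i, k) y^k \partial_y^k$ applied to $(y + c)^r$ with $y = x_i e^{t_i}$ and $c = \sum_{j \ne i} x_j e^{t_j}$, and the inductive argument above is precisely the standard proof of that operator identity.
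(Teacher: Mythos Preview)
Your proposal is correct and follows essentially the same route as the paper: induction on $\beta_i$, differentiation of each summand via the product rule together with $\partial M^{s}/\partial t_i = s\,x_i e^{t_i} M^{s-1}$, reindexing the second sum by $\alpha_i' = \alpha_i + 1$, and then invoking the Stirling recurrence $S(\beta_i+1,\alpha_i) = \alpha_i S(\beta_i,\alpha_i) + S(\beta_i,\alpha_i-1)$ to merge the two sums. Your closing remark linking the identity to the operator formula $(y\partial_y)^{\beta_i} = \sum_k S(\beta_i,k)\,y^k\partial_y^k$ is a nice piece of additional context not made explicit in the paper.
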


\noindent
For the proof we will use the following recursive relation for the Stirling numbers of the second kind.
\begin{lemma}\label{lemrecurs}
For any integers $\beta\ge 0$ and $\alpha \ge 1$, one has
\begin{equation}\label{eq3}
S(\beta+1,\alpha)= S(\beta,\alpha -1) + \alpha S(\beta,\alpha).
\end{equation}
\end{lemma}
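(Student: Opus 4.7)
My plan is to prove the recurrence by a standard combinatorial double-counting argument, using the definition of $S(\beta+1,\alpha)$ as the number of partitions of a $(\beta+1)$-element set into $\alpha$ nonempty blocks. This is the most natural route here, since the paper introduced $S(n,k)$ combinatorially in the discussion preceding Theorem \ref{theomom} rather than algebraically.

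Concretely, I would fix the ground set $[\beta+1]$ and classify partitions of $[\beta+1]$ into $\alpha$ nonempty blocks according to the block containing the distinguished element $\beta+1$. There are exactly two cases. In the first case, $\{\beta+1\}$ is a singleton block; removing it leaves a partition of $[\beta]$ into $\alpha-1$ nonempty blocks, and this correspondence is a bijection, so this case contributes $S(\beta,\alpha-1)$. In the second case, $\beta+1$ lies in a block of size at least $2$; deleting $\beta+1$ from that block produces a partition of $[\beta]$ into $\alpha$ nonempty blocks together with a choice of one of the $\alpha$ blocks to reinsert $\beta+1$ into, so this case contributes $\alpha S(\beta,\alpha)$. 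Summing the two cases yields the claimed identity.

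A small sanity check I would include is the boundary behavior: the argument is valid for all $\beta \ge 0$ and $\alpha \ge 1$, using the conventions $S(0,0)=1$ and $S(n,0)=0$ for $n\ge 1$ recorded in the paper (so, e.g., for $\beta=0$ and $\alpha=1$ we get $S(1,1)=S(0,0)+1\cdot S(0,1)=1+0=1$, as required).

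I do not expect any real obstacle: the only subtlety is making sure the bijections in the two cases are described unambiguously and that the edge cases $\alpha=1$ (where $S(\beta,\alpha-1)=S(\beta,0)$ vanishes unless $\beta=0$) and $\alpha>\beta+1$ (where both sides are $0$) are covered by the conventions already in force. An alternative route would be to prove the recurrence algebraically from the identity $x^{\beta+1}=x\cdot x^\beta$ combined with the expansion $x^\beta=\sum_{k}S(\beta,k)x^{\underline{k}}$ and the falling-factorial identity $x\cdot x^{\underline{k}}=x^{\underline{k+1}}+k\,x^{\underline{k}}$, but the combinatorial proof is shorter and more transparent, so that is what I would present.
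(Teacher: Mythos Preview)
Your proposal is correct and follows essentially the same combinatorial argument as the paper: both classify partitions of $[\beta+1]$ into $\alpha$ nonempty blocks according to whether the element $\beta+1$ forms a singleton block (contributing $S(\beta,\alpha-1)$) or joins one of the $\alpha$ blocks of a partition of $[\beta]$ (contributing $\alpha S(\beta,\alpha)$). Your additional remarks on boundary cases and the alternative falling-factorial route are sound but go slightly beyond what the paper records.
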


\begin{proof}
This well known fact can be easily checked as follows.
By definition, $S(\beta+1,\alpha)$ counts the number of ways of partitioning the set $\{1,\ldots,\beta,\beta+1\}$ into $\alpha$ nonempty subsets.
Considering the last element $\beta+1$, one can either put it in a singleton subset (so that there are $S(\beta,\alpha-1)$ such partitions), or partition $\{1,\ldots,\beta\}$ into $\alpha$ nonempty subsets and then assign the last element $\beta+1$ to one of them (so that there are $\alpha S(\beta,\alpha)$ such partitions). This shows the result.
\qed\end{proof}

\begin{proof} {\em (of Lemma \ref{lemsingle})}
To simplify notation we set  $M^r=M_{x}^{r}(t)$. We show the result using  induction on $\beta_i\ge 0$. The result holds clearly for $\beta_i=0$ and also for $\beta_i=1$ in which case we have
\begin{equation}\label{eq1}
\frac{\partial M^r}{\partial t_i}=rx_ie^{t_i}M^{r-1}.
\end{equation}
We now assume that the result holds for $\beta_i$ and we show that it also holds for $\beta_i+1$.
For this, using the induction assumption, we obtain
\begin{equation}\label{eq000}
{\frac{\partial^{\beta_i+1}M^{r}}{\partial t_i^{{\beta_i}+1}}}
= { \partial\over \partial t_i}{ \partial^{{\beta_i}} M^{r}\over \partial t_i^{{\beta_i}}}
=
 { \partial\over \partial t_i}\left( \sum_{\alpha_i=0}^{\beta_i}S(\beta_i,\alpha_i)r^{\underline{\alpha_i}}x_i^{\alpha_i}e^{\alpha_it_i}M^{r-\alpha_i}\right)
 =  \sum_{\alpha_i=0}^{\beta_i}S(\beta_i,\alpha_i)r^{\underline{\alpha_i}}x_i^{\alpha_i}
  { \partial\over \partial t_i} ( e^{\alpha_it_i}M^{r-\alpha_i}).
   \end{equation}
   Now, using (\ref{eq1}), we can compute the last term as follows:
   $$ { \partial\over \partial t_i} ( e^{\alpha_it_i}M^{r-\alpha_i})
   = \alpha_i e^{\alpha_it_i} M^{r-\alpha_i} + (r-\alpha_i)x_i e^{(\alpha_i+1)t_i} M^{r-\alpha_i-1}.$$
   Plugging this into relation (\ref{eq000}), we deduce
 \begin{eqnarray*}
 {\frac{\partial^{\beta_i+1}M^{r}}{\partial t_i^{{\beta_i}+1}}}
 &=& \sum_{\alpha_i=0}^{\beta_i} \alpha_iS(\beta_i,\alpha_i) r^{\underline{\alpha_i}} x_i^{\alpha_i} e^{\alpha_it_i} M^{r-\alpha_i}
 +\sum_{\alpha_i=0}^{\beta_i} S(\beta_i,\alpha_i) r^{\underline {\alpha_i+1}} x_i^{\alpha_i+1} e^{(\alpha_i+1)t_i} M^{r-\alpha_i-1}\\
 &=&  \sum_{\alpha_i=0}^{\beta_i} \alpha_iS(\beta_i,\alpha_i) r^{\underline{\alpha_i}} x_i^{\alpha_i} e^{\alpha_it_i} M^{r-\alpha_i}
 +\sum_{\alpha_i'=1}^{\beta_i+1} S(\beta_i,\alpha'_i-1) r^{\underline{\alpha'_i}} x_i^{\alpha'_i} e^{\alpha'_it_i} M^{r-\alpha'_i}\\
&=& \sum_{\alpha_i=0}^{\beta_i} (\underbrace{\alpha_i S(\beta_i,\alpha_i)+ S(\beta_i,\alpha_i-1)}_{= S(\beta_i+1,\alpha_i) \ \text{ by } (\ref{eq3})})
r^{\underline{\alpha_i}} x_i^{\alpha_i} e^{\alpha_it_i} M^{r-\alpha_i}
+ r^{\underline{\beta_i+1}} x_i^{\beta_i+1} e^{(\beta_i+1)t_i}M^{r-\beta_i-1}\\
&=&\sum_{\alpha_i=0} ^{\beta_i+1} S(\beta_i+1,\alpha_i) r^{\underline{\alpha_i}} x_i^{\alpha_i} e^{\alpha_it_i} M^{r-\alpha_i},
   \end{eqnarray*}
  which concludes the proof.
%
%
\qed \end{proof}
\noindent We now  extend the result of Lemma \ref{lemsingle} to an arbitrary derivative of the moment generating function.

\begin{theorem}\label{thmmoment}
For any $x\in\Delta_n$, $\beta\in \oN^n$ and $t\in \oR^n$, one has
$${\frac{\partial^{|\beta|}M_{x}^{r}(t)}{\partial t_1^{\beta_1}\cdots \partial t_{n}^{\beta_n} }}
=\sum_{\alpha\in \oN^n: \alpha\le\beta}r^{\underline{|\alpha|}}x^{\alpha}M_{x}^{r-|\alpha|}(t)
\left(\prod_{i=1}^ne^{\alpha_it_i}S(\beta_i,\alpha_i)\right).$$
\end{theorem}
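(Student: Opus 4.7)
\medskip
\noindent\textbf{Proof plan.} The plan is to proceed by induction on the number of coordinates $i$ for which $\beta_i > 0$, or equivalently, to apply Lemma \ref{lemsingle} successively coordinate by coordinate. The base case (a single active coordinate) is exactly Lemma \ref{lemsingle}, since $M_x^r(t)$ depends on all variables but we differentiate only in one. For the inductive step, write
\[
{\partial^{|\beta|}M_x^r(t)\over \partial t_1^{\beta_1}\cdots \partial t_n^{\beta_n}}
= {\partial^{\beta_n}\over \partial t_n^{\beta_n}}\left({\partial^{|\beta'|} M_x^r(t)\over \partial t_1^{\beta_1}\cdots \partial t_{n-1}^{\beta_{n-1}}}\right),
\]
where $\beta'=(\beta_1,\dots,\beta_{n-1},0)$, and use commutativity of partial derivatives together with the induction hypothesis applied to the inner derivative.

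\medskip
\noindent The induction hypothesis gives
\[
{\partial^{|\beta'|} M_x^r(t)\over \partial t_1^{\beta_1}\cdots \partial t_{n-1}^{\beta_{n-1}}}
= \sum_{\alpha'\le\beta'} r^{\underline{|\alpha'|}} x^{\alpha'} M_x^{r-|\alpha'|}(t) \prod_{i=1}^{n-1} e^{\alpha_i t_i} S(\beta_i,\alpha_i).
\]
Next, I would differentiate $\beta_n$ times with respect to $t_n$. The factors $x^{\alpha'}$, $S(\beta_i,\alpha_i)$, and $e^{\alpha_i t_i}$ (for $i<n$) are constant in $t_n$, so only $M_x^{r-|\alpha'|}(t)$ carries the dependence. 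Applying Lemma \ref{lemsingle} to $M_x^{r-|\alpha'|}(t)$ with exponent $r-|\alpha'|$ in place of $r$ and coordinate $i=n$ yields
\[
{\partial^{\beta_n}\over \partial t_n^{\beta_n}} M_x^{r-|\alpha'|}(t) = \sum_{\alpha_n=0}^{\beta_n} S(\beta_n,\alpha_n)(r-|\alpha'|)^{\underline{\alpha_n}} x_n^{\alpha_n} e^{\alpha_n t_n} M_x^{r-|\alpha'|-\alpha_n}(t).
\]

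\medskip
\noindent The remaining step is the algebraic bookkeeping: combine the two sums over $\alpha'\le\beta'$ and $\alpha_n\in\{0,\dots,\beta_n\}$ into a single sum over $\alpha=(\alpha',\alpha_n)\le\beta$, then observe the telescoping identity $r^{\underline{|\alpha'|}}(r-|\alpha'|)^{\underline{\alpha_n}} = r^{\underline{|\alpha'|+\alpha_n}} = r^{\underline{|\alpha|}}$, and assemble $x^{\alpha'} x_n^{\alpha_n} = x^\alpha$ and $\prod_{i=1}^{n-1} e^{\alpha_i t_i}\cdot e^{\alpha_n t_n} = \prod_{i=1}^n e^{\alpha_i t_i}$. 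This recovers precisely the claimed formula.

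\medskip
\noindent The main obstacle is essentially notational: one has to verify that the iterative application of Lemma \ref{lemsingle} to different coordinates produces the correct joint factor $r^{\underline{|\alpha|}}$ (rather than a more complicated product), which is guaranteed by the telescoping property of falling factorials noted above. Once Theorem \ref{thmmoment} is established, Theorem \ref{theomom} follows immediately by setting $t=0$ in the theorem and invoking (\ref{momentmgf}), using that $M_x^{r-|\alpha|}(0) = (\sum_i x_i)^{r-|\alpha|} = 1$ on $\Delta_n$ and that $r^{\underline{|\alpha|}}=0$ when $|\alpha|>r$, so the contribution of $\alpha$ with $|\alpha|>r$ vanishes.
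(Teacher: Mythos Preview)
Your proposal is correct and follows essentially the same approach as the paper: induction on the size of the support of $\beta$, with Lemma \ref{lemsingle} as the base case and as the tool in the inductive step, combined via the telescoping identity $r^{\underline{|\alpha'|}}(r-|\alpha'|)^{\underline{\alpha_n}} = r^{\underline{|\alpha|}}$. The only cosmetic difference is that the paper peels off the $(k{+}1)$-st coordinate while you peel off the $n$-th, which amounts to the same thing up to relabeling.
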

\begin{proof}
We show the result using induction on the size $k$ of the support of $\beta$, i.e.,
$k=|\{i\in [n]:\beta_i\ne 0\}|.$
The result holds clearly for $k=0$ and, for  $k=1$,  the result holds by Lemma \ref{lemsingle}.
We now assume  that the result holds for $k$ and we show that it also holds for $k+1$. For this, consider the sequences $\beta'=(\beta_1,\ldots,\beta_k,\beta_{k+1},0,\ldots,0)$ and $\beta=(\beta_1,\ldots,\beta_k,0,0,\ldots,0)\in \oN^n$, where $\beta_1,\ldots,\beta_{k+1}\ge 1$.
By the induction assumption we know that
\begin{equation}\label{re10}
\frac{\partial^{|\beta|}M^r}{\partial t_1^{\beta_1}\cdots \partial t_{k}^{\beta_k}}=
\sum_{0\le\alpha\le\beta}r^{\underline{|\alpha|}}x^{\alpha}M^{r-|\alpha|}
\left(\prod_{i=1}^ne^{\alpha_it_i}S(\beta_i,\alpha_i)\right),
\end{equation}
setting  again $M^r=M^r_x(t)$ for simplicity.
Using (\ref{re10}), we obtain
\begin{eqnarray}\label{re98}
\frac{\partial^{|\beta'|}M^r}{\partial t_1^{\beta_1}\cdots \partial t_{k+1}^{\beta_{k+1}}}
&=&
{\partial^{\beta_{k+1}} \over \partial t_{k+1}^{\beta_{k+1}}}
\frac{\partial^{|\beta|}M^r}{\partial t_1^{\beta_1}\cdots \partial t_{k}^{\beta_{k}}}
=
{\partial^{\beta_{k+1}} \over \partial t_{k+1}^{\beta_{k+1}}}
\left(\sum_{0\le\alpha\le\beta}r^{\underline{|\alpha|}}x^{\alpha}M^{r-|\alpha|}
\left(\prod_{i=1}^ne^{\alpha_it_i}S(\beta_i,\alpha_i)\right) \right).
\end{eqnarray}
Note that $\alpha_{k+1}=0$ since $\alpha_{k+1}\le \beta_{k+1}$ and $\beta_{k+1}=0$.
 Hence, $M^{r-|\alpha|}$ is the only term containing the variable $t_{k+1}$ and thus (\ref{re98}) implies
  \begin{eqnarray}  \label{re99} 
\frac{\partial^{|\beta'|}M^r}{\partial t_1^{\beta_1}\cdots \partial t_{k+1}^{\beta_{k+1}}}&=&\sum_{0\le\alpha\le\beta}r^{\underline{|\alpha|}}x^{\alpha}
\left(\prod_{i=1}^ne^{\alpha_it_i}S(\beta_i,\alpha_i)\right) {\partial^{\beta_{k+1}}M^{r-|\alpha|}\over \partial t_{k+1}^{\beta_{k+1}}}.
\end{eqnarray}
We now use  Lemma \ref{lemsingle} to compute the last term:
\begin{equation}\label{re100}
 {\partial^{\beta_{k+1}}M^{r-|\alpha|}\over \partial t_{k+1}^{\beta_{k+1}}}=
 \sum_{\theta_{k+1}=0}^{\beta_{k+1}}
S(\beta_{k+1},\theta_{k+1})(r-|\alpha|)^{\underline{\theta_{k+1}}}x_{k+1}^{\theta_{k+1}}e^{\theta_{k+1}t_{k+1}}M^{r-|\alpha|-\theta_{k+1}}.
 \end{equation}
Plugging (\ref{re100}) into (\ref{re99}) we obtain
\begin{eqnarray*}
&&\frac{\partial^{|\beta'|}M^r}{\partial t_1^{\beta_1}\cdots \partial t_{k+1}^{\beta_{k+1}}}\\
&=&\sum_{0\le\alpha\le\beta}r^{\underline{|\alpha|}}x^{\alpha}
\left(\sum_{\theta_{k+1}=0}^{\beta_{k+1}}
S(\beta_{k+1},\theta_{k+1})(r-|\alpha|)^{\underline{\theta_{k+1}}}x_{k+1}^{\theta_{k+1}}e^{\theta_{k+1}t_{k+1}}M^{r-|\alpha|-\theta_{k+1}}\right)
\left(\prod_{i=1}^ne^{\alpha_it_i}S(\beta_i,\alpha_i)\right)\\
&=&
\sum_{0\le\alpha\le\beta}
\sum_{\theta_{k+1}=0}^{\beta_{k+1}}
r^{\underline{|\alpha|+\theta_{k+1}}}x^{\alpha+e_{k+1}\theta_{k+1}}
S(\beta_{k+1},\theta_{k+1})e^{\theta_{k+1}t_{k+1}}M^{r-(|\alpha|+\theta_{k+1})}
\left(\prod_{i=1}^ne^{\alpha_it_i}S(\beta_i,\alpha_i)\right)\\
&=&
\sum_{0\le\alpha'\le\beta'}
r^{\underline{|\alpha'|}}x^{\alpha'}M^{r-|\alpha'|}
\left(\prod_{i=1}^ne^{\alpha_i't_i}S(\beta_i',\alpha_i')\right),
\end{eqnarray*}
after setting $\alpha'=\alpha+e_{k+1}\theta_{k+1}$.
This concludes the proof of Theorem \ref{thmmoment}. \qed \end{proof}


\end{document}